\newtheorem{theorem}{Theorem}
\theoremstyle{plain}
\newtheorem{corollary}[theorem]{Corollary}
\newtheorem{definition}[theorem]{Definition}
\newtheorem{fact}[theorem]{Fact}
\newtheorem{lemma}[theorem]{Lemma}
\newtheorem{proposition}[theorem]{Proposition}
\newtheorem{question}[theorem]{Question}
\numberwithin{equation}{section}
\numberwithin{theorem}{section}
\numberwithin{case}{section}
\numberwithin{subcase}{case}
\newcommand*\patchAmsMathEnvironmentForLineno[1]{%
\expandafter\let\csname old#1\expandafter\endcsname\csname #1\endcsname
\expandafter\let\csname oldend#1\expandafter\endcsname\csname end#1\endcsname
\renewenvironment{#1}%
{\linenomath\csname old#1\endcsname}%
{\csname oldend#1\endcsname\endlinenomath}}%
\newcommand*\patchBothAmsMathEnvironmentsForLineno[1]{%
\patchAmsMathEnvironmentForLineno{#1}%
\patchAmsMathEnvironmentForLineno{#1*}}%
\def\A{\mathcal{A}}
\def\B{\mathcal{B}}
\def\C{\mathcal{C}}
\def\D{\mathcal{D}}
\def\F{\mathcal{F}}
\def\G{\mathcal{G}}
\def\h{\mathcal{H}}
\def\J{\mathcal{J}}
\begin{document}
\onehalfspace
\title[Beyond the Hilton--Milner Theorem]{The maximum size of a non-trivial intersecting uniform family that is not a subfamily of the Hilton--Milner family}

\author{Jie Han}
\author{Yoshiharu Kohayakawa}

\address{Instituto de Matem\'{a}tica e Estat\'{\i}stica, Universidade de S\~{a}o Paulo, Rua do Mat\~{a}o 1010, 05508-090, S\~{a}o Paulo, Brazil}

\email{jhan@ime.usp.br, yoshi@ime.usp.br}

\thanks{The first author is supported by FAPESP (2014/18641-5, 2015/07869-8).
  The second author is partially supported by FAPESP (2013/03447-6,
  2013/07699-0), CNPq (459335/2014-6, 310974/2013-5 and 477203/2012-4)
  and the NSF (DMS~1102086).  The authors acknowledge the support of
  NUMEC/USP (Project MaCLinC/USP)}


\shortdate
\yyyymmdddate
\settimeformat{ampmtime}

\subjclass[2010]{Primary 05D05} %
\keywords{Intersecting families, Hilton--Milner theorem, Erd\H os--Ko--Rado theorem}%

\begin{abstract}
  The celebrated Erd\H{o}s--Ko--Rado theorem determines the maximum
  size of a $k$-uniform intersecting family.  The Hilton--Milner
  theorem determines the maximum size of a $k$-uniform intersecting
  family that is not a subfamily of the so-called Erd\H{o}s--Ko--Rado
  family.  In turn, it is natural to ask what the maximum size of an
  intersecting $k$-uniform family that is neither a subfamily of the
  Erd\H{o}s--Ko--Rado family nor of the Hilton--Milner family is.  For
  $k\ge 4$, this was solved (implicitly) in the same paper by
  Hilton--Milner in 1967.  We give a different and simpler proof,
  based on the shifting method, which allows us to solve all cases
  $k\ge 3$ and characterize all extremal families achieving the
  extremal value.
\end{abstract}

\maketitle

\section{Introduction}
Let $X$ be an $n$-element set.  For $1\le k\le n$, let $\binom{X}k$ denote the family of all subsets of $X$ of cardinality $k$. 
A family $\F\subseteq 2^X$ (or a hypergraph) is called \emph{intersecting} if for all $F_1, F_2\in \F$, we have $F_1\cap F_2\neq \emptyset$.
A family $\F$ is \emph{$k$-uniform} if every member of $\F$ contains exactly $k$ elements.
An intersecting family $\F$ is \emph{trivial} if $\bigcap_{F\in \F} F\neq \emptyset$, i.e., there is an element that is common to all members of $\F$.
For families $\h \subseteq 2^X$ and $\G \subseteq 2^Y$, we write $\h\subseteq_S \G$ if there is an injective map $f:X \rightarrow Y$ such that $f(E)\in \G$ for every $E\in \h$.
If both $\h\subseteq_S \G$ and $\G\subseteq_S \h$ hold, then we say
that $\h$ and~$\G$ are isomorphic.  For simplicity, we often abuse notation
and write $\h=\G$ if~$\h$ and~$\G$ are isomorphic.

The celebrated Erd\H{o}s--Ko--Rado theorem~\cite{EKR} determines the
maximum size of a uniform intersecting family.  For any~$x\in X$, let
$\F(x)$ be the intersecting $k$-uniform family
$\{F\in \binom Xk : x\in F\}$.  We call~$x$ the \emph{center} of $\F(x)$.  We write~$\F_0$ for any family
isomorphic to~$\F(x)$.

\begin{theorem}[The Erd\H os--Ko--Rado theorem~\cite{EKR}]
  \label{thm:EKR}
  Let $\F$ be a $k$-uniform intersecting family on $X$ and
  suppose~$n\ge 2k$.  Then $|\F|\le \binom{n-1}{k-1}$. If $n > 2k$,
  equality holds only for~$\F_0$.
\end{theorem}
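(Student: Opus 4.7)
The plan is to apply the shifting method, the same tool that underpins the paper's main result. For each pair $1 \le i < j \le n$, define the shift $S_{ij}$ by replacing every $F \in \F$ with $j \in F$, $i \notin F$, and $(F \setminus \{j\}) \cup \{i\} \notin \F$ by $(F \setminus \{j\}) \cup \{i\}$, while leaving all other members of $\F$ untouched. A routine check shows that $S_{ij}(\F)$ has the same cardinality as $\F$, remains $k$-uniform, and remains intersecting; and iterating these shifts terminates (for example, because $\sum_{F \in \F}\sum_{x \in F} x$ strictly decreases under each nontrivial shift). Hence we may assume $\F$ is \emph{shifted}, meaning $S_{ij}(\F) = \F$ for all $i < j$.

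Given a shifted $\F$, I would argue by induction on $k$, and within each $k$ by induction on $n \ge 2k$. The base case $n = 2k$ is the classical pairing argument: $F$ and $X \setminus F$ cannot both belong to $\F$, so $|\F| \le \frac{1}{2}\binom{2k}{k} = \binom{2k-1}{k-1}$. For $n > 2k$, decompose $\F = \F_0 \cup \F_1$ where $\F_0 = \{F \in \F : n \notin F\}$ and $\F_1' = \{F \setminus \{n\} : n \in F \in \F\}$, both viewed on the ground set $[n-1]$. Induction on $n$ bounds $|\F_0| \le \binom{n-2}{k-1}$. The key observation is that shiftedness forces $\F_1'$ to be intersecting as well: if $F_1, F_2 \in \F_1$ satisfied $(F_1 \setminus \{n\}) \cap (F_2 \setminus \{n\}) = \emptyset$, then $F_1 \cap F_2 = \{n\}$, and choosing any $i \in [n-1] \setminus (F_1 \cup F_2)$ (possible since $n - 1 - (2k-1) > 0$) would yield, via shiftedness at the pair $(i,n)$, a set $(F_1 \setminus \{n\}) \cup \{i\} \in \F$ disjoint from $F_2$, contradicting the intersecting property of $\F$. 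Induction on $k$ applied at parameters $(n-1, k-1)$ then yields $|\F_1'| \le \binom{n-2}{k-2}$, and Pascal's identity gives $|\F| \le \binom{n-1}{k-1}$.

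For the extremal characterization when $n > 2k$, the strict inequality $n - 1 > 2(k-1)$ activates the inductive uniqueness clause, forcing $\F_1'$ to be a star; combined with shiftedness this pins the center down to $\{1\}$ and shows that the shifted $\F$ equals $\F(1)$. The main technical obstacle will be the reverse step: verifying that if the original $\F$ attains the extremal value, then each individual shift used in the reduction preserves the isomorphism class of a star, so that the original $\F$ is itself isomorphic to $\F_0$ rather than some genuinely distinct extremal object. This can be handled by a careful induction on the number of shifts, together with a direct analysis of how a single $S_{ij}$ can act on a star-shaped family.
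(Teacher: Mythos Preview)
The paper does not actually prove Theorem~\ref{thm:EKR}; it is quoted as a classical result, with only a one-sentence allusion to shifting (``To prove Theorem~\ref{thm:EKR}, we apply the shifting operator $S_{xy}$ repeatedly\ldots''). Your proposal supplies the standard shifting proof, which is correct for the bound and entirely in the spirit of what the paper gestures at.

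Two remarks on the uniqueness sketch. First, your inductive step uses only that $\F_1'$ is a star to conclude the shifted $\F$ equals $\F(1)$; you should spell out the missing line: once $\F_1'$ is the full star at~$1$ on~$[n-1]$, shiftedness (replacing $n$ by any $j\in[2,n-1]$) forces every $k$-set of $[n-1]$ containing~$1$ into~$\F_0$, which by cardinality gives $\F_0=\F(1)\cap\binom{[n-1]}{k}$ without needing the uniqueness clause for~$\F_0$ (important, since $n-1$ may equal~$2k$). Second, the ``reverse shift'' step you flag is exactly the right concern, and it does go through: if $S_{xy}(\h)$ is the full star at some~$c$, then $c\in\{x,y\}$ is the only nontrivial case, and there the preimages of~$\h$ correspond to partitions of $\binom{[n]\setminus\{x,y\}}{k-1}$ into two cross-intersecting parts; since the Kneser graph $K(n-2,k-1)$ is connected precisely when $n>2k$, one part must be empty and $\h$ is again a full star. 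This is the same ``non-separable'' mechanism the paper exploits in Lemma~\ref{lem:shift} and Propositions~\ref{prop:1}--\ref{prop:crossint} for its own extremal families.
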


For non-trivial intersecting families, Hilton and Milner~\cite{HM67}
proved Theorem~\ref{thm:HM} below, whose statement is simplified if we
introduce some notation first.  For any $k$-set~$F\subset X$ and any
$x\in X\setminus F$, let
$\F(F,x)=\{ F\}\cup \{G\in \binom{X}k:x\in G, \,F\cap
G\neq\emptyset\}$.  We call~$x$ the \emph{center} of $\F(F, x)$.
We write~$\F_1$ for any family isomorphic to such a family~$\F(F,x)$.
We also define the following families: for any $3$-set~$S\subset X$,
let $\mathcal{T}(S):=\{F\in \binom{X}{k}: |F\cap S|\ge 2\}$.  We
write~$\G_2$ for any family isomorphic to such a
family~$\mathcal{T}(S)$.  The celebrated \textit{Hilton--Milner
  theorem} is as follows (for alternative proofs and generalizations,
see, e.g., Borg~\cite{borg13:_non}, Frankl and F\"uredi~\cite{FF86}
and Frankl and Tokushige~\cite{frankl92:_some}).

\begin{theorem}[The Hilton--Milner theorem~\cite{HM67}]
  \label{thm:HM}
  Let $\F$ be a non-trivial $k$-uniform intersecting family on $X$
  with $k\ge 2$ and $n>2k$. Then
  $|\F|\le \binom{n-1}{k-1} - \binom{n-k-1}{k-1} +1$. Equality holds
  only for the family~$\F_1$ and, if~$k\in\{2,3\}$, for the
  family~$\G_2$.
\end{theorem}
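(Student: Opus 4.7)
The plan is to prove the Hilton--Milner bound via the shifting method. First, apply shift operations $S_{ij}$ for $1 \le i < j \le n$; each preserves $|\F|$ and the intersecting property. The delicate point is that shifts of the form $S_{1j}$ may turn every set of $\F$ into one containing $1$ and hence destroy non-triviality, so I would perform shifts in an order that preserves non-triviality as long as possible and handle the borderline step separately.

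Assume now that $\F$ is shifted and non-trivial. A key intermediate claim is that $F_0 := \{2, 3, \ldots, k+1\} \in \F$. Indeed, non-triviality supplies some $F = \{a_1 < \cdots < a_k\} \in \F$ with $a_1 \ge 2$; iteratively applying the shifts $S_{i+1, a_i}$, which never involve the index $1$, together with shiftedness, yields $F_0 \in \F$.

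Partitioning $\F = \F(1) \cup \F(\bar 1)$ with $\F(1) = \{F \in \F : 1 \in F\}$, observe that every member of $\F$ must meet $F_0$. Therefore elements of $\F(1)$ correspond to $(k-1)$-subsets of $\{2, \ldots, n\}$ meeting $F_0$, giving the base bound
\[
|\F(1)| \le \binom{n-1}{k-1} - \binom{n-k-1}{k-1}.
\]
When $|\F(\bar 1)| = 1$, i.e., $\F(\bar 1) = \{F_0\}$, this already delivers the theorem with equality. For $|\F(\bar 1)| \ge 2$, each additional $G \in \F(\bar 1)$ excludes from $\F(1)$ every set $\{1\} \cup A$ with $A \subseteq \{2, \ldots, n\} \setminus G$, because such a set would be disjoint from $G$. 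The main technical hurdle is to show that the combined exclusions grow at least as fast as $|\F(\bar 1)| - 1$: this can be done via inclusion--exclusion across $\F(\bar 1)$, exploiting that $\F(\bar 1)$ is itself intersecting on $\{2, \ldots, n\}$ and contains $F_0$, forcing its members to share structure with $F_0$ (each must meet $F_0$) and controlling overlap of the exclusion sets.

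For the extremal analysis, tracing equality back through these inequalities forces $\F(\bar 1) = \{F_0\}$ and $\F(1)$ to consist of every $k$-set through $1$ that meets $F_0$, i.e., $\F = \F_1$. When $k \in \{2, 3\}$, a parallel tight case occurs in which $\F(\bar 1)$ spans several sets contained in a common set $S$ with $|S| = 3$, matching $\G_2 = \mathcal{T}(S)$; for $k \ge 4$, a direct computation shows $|\G_2|$ is strictly below the bound, excluding this alternative. The main obstacle is thus the quantitative trade-off between $|\F(\bar 1)|$ and the induced deficit in $|\F(1)|$, together with the careful bookkeeping of shifts that threaten non-triviality.
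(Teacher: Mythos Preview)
The paper does not contain its own proof of this statement: Theorem~\ref{thm:HM} is quoted as a known result from~\cite{HM67} (with further references to~\cite{borg13:_non,FF86,frankl92:_some}) and is used as an input later on. There is therefore nothing in the paper to compare your argument against.

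As to the argument itself, the outline is the standard shifting approach and the first half is fine: after suitable shifts one has $F_0=\{2,\dots,k+1\}\in\F$, and every member of~$\F(1)$ must meet~$F_0$, giving the displayed bound on~$|\F(1)|$. The genuine gap is the step you yourself flag as ``the main technical hurdle'': you assert that when $|\F(\bar1)|=m\ge2$ the additional exclusions on~$\F(1)$ forced by $G_2,\dots,G_m$ total at least $m-1$, but you give no mechanism for this. Inclusion--exclusion over $\F(\bar1)$ does not by itself yield such a bound, and the fact that the $G_j$ pairwise intersect and all meet~$F_0$ is not enough information to control the overlaps of the exclusion sets $\{A:A\cap G_j=\emptyset\}$. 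The usual proofs avoid this count altogether: one either shows (via shiftedness) that any two members of~$\F$ already intersect inside $[2k]$ and then runs a cross-intersection argument on the traces $\{F\cap[2k]:F\in\F\}$, or one analyses the moment at which a shift $S_{1j}$ would destroy non-triviality and reads off the structure there. Your sketch does not supply either of these ingredients, so as written the proof is incomplete at exactly the decisive step. The uniqueness analysis is likewise only asserted; tracing equality back to $\F_1$ (and to~$\G_2$ for $k\le3$) requires knowing that the shifts you performed are invertible on the extremal configurations, which you have not argued.
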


The Hilton--Milner theorem determines the maximum size of an
intersecting family \emph{that is not EKR}, i.e., that is not
contained in an~$\F_0$.  Note that the bound in the Hilton--Milner
theorem is much smaller than the bound in the Erd\H{o}s--Ko--Rado
theorem (as long as~$k$ is not too large) and, therefore, the
Hilton--Milner theorem shows the so-called ``stability'' of the
Erd\H{o}s--Ko--Rado theorem in a very strong sense (for other
stability-type results related to the Erd\H{o}s--Ko--Rado theorem, we
refer the reader to Dinur and Friedgut~\cite{dinur09:_inter},
Keevash~\cite{keevash08:_shadow} and Keevash and
Mubayi~\cite{keevash10:_set}).  What lies beyond the 1967 theorem of
Hilton and Milner, that is, beyond Theorem~\ref{thm:HM}?  Let us say
that a family~$\F$ \textit{is HM} if it is contained in the
family~$\F_1$ or~$k\in\{2,3\}$ and it is contained in~$\G_2$.  Our
question is then the following:

\begin{question}\label{q1}
  What is the maximum size of an intersecting family $\h$ that is
  neither EKR nor HM?  Which families achieve the extremal
  value?$\,$\footnote{This question has also been asked on MathOverflow
    \url{http://mathoverflow.net/q/94438}.}
\end{question} 

In fact, this question was partially answered in the 1967 paper of
Hilton and Milner~\cite{HM67}.  Their main result in that
paper~\cite[Theorem 3]{HM67}, which contains Theorem~\ref{thm:HM}
above, is as follows.  Here, for simplicity, we state it only for
$k$-uniform families.

\begin{theorem} \cite{HM67} \label{thm:HM2}
Fix integers $\min\{3, s\}\le k\le n/2$ and let $\F=\{A_1,\dots, A_m\}$ be a $k$-uniform intersecting family on $X$.
Moreover, assume that for any $S\subseteq[m]$ with $|S| > m - s$, we have
\[
\bigcap_{i\in S} A_i = \emptyset.
\]
Then 
\begin{equation}
  \label{eq:HM2}
  m \le \begin{cases}
    \binom{n-1}{k-1} - \binom{n-k}{k-1} + n - k &\text{ if } 2 < k\le s+2, \\
    \binom{n-1}{k-1} - \binom{n-k}{k-1} + \binom{n-k-s}{k-s-1} + s
    &\text{ if } k\le 2 \text{ or } k\ge s+2. 
  \end{cases}
\end{equation}
Moreover, the bounds in~\eqref{eq:HM2} are best possible.
\end{theorem}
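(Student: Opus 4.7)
The plan is to combine the shifting method with an analysis of the structure around an element of maximum degree. First I would apply the $(i,j)$-shift operators $S_{ij}$ repeatedly; these preserve $k$-uniformity, the size of $\F$, and the intersecting property, so I may assume $\F$ is shifted. In a shifted family the element~$1$ has maximum degree, so the hypothesis on the empty $(m-s+1)$-wise intersections is equivalent to having at least $s$ members of~$\F$ that avoid~$1$, a condition easier to exploit. A small subtlety is that shifting need not preserve the codegree hypothesis at every element of $X$, but since the target bound in~\eqref{eq:HM2} is isomorphism-invariant and the shifted relative has $|\F_{\bar 1}|\ge s$, this single-element condition is all one needs.

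Second, I would decompose $\F=\G\cup\h$, where $\G$ is the set of $A\in\F$ containing~$1$ and $\h=\F\setminus\G$, so $|\h|\ge s$. Every $A\in\G$ meets every $B\in\h$, so the link family $\G^*=\{A\setminus\{1\}:A\in\G\}$ consists of $(k-1)$-sets in $X\setminus\{1\}$ transversal to $\h$. A single fixed $B\in\h$ yields the coarse inequality
\[
|\G|\le\binom{n-1}{k-1}-\binom{n-k-1}{k-1},
\]
which already recovers the Hilton--Milner theorem in the case $s=1$. To reach the leading term $\binom{n-1}{k-1}-\binom{n-k}{k-1}$ of~\eqref{eq:HM2} I would exploit shiftedness to pin the colex-minimal element of $\h$ to the canonical set $\{2,3,\dots,k+1\}$, so the transversal count can be improved by a Pascal-type identity $\binom{n-k}{k-1}-\binom{n-k-1}{k-1}=\binom{n-k-1}{k-2}$.

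Third, to capture the finer term coming from $\h$ I would choose $s$ members $B_1,\dots,B_s\in\h$ whose pattern of common intersections is as spread as possible and apply inclusion--exclusion to the $(k-1)$-subsets of $X\setminus\{1\}$ missing some $B_i$. This computation behaves very differently in the two regimes of the theorem, and that is precisely the source of the dichotomy in~\eqref{eq:HM2}: for $k\le s+2$ the members of~$\h$ must cluster on a common $(k-1)$-core and one shows $|\h|\le n-k$ via a sunflower structure, while for $k\ge s+2$ one can afford $s$ ``anchor'' sets together with $\binom{n-k-s}{k-s-1}$ extensions of a fixed $(s+1)$-subset of the anchor region. The main obstacle, and what requires the most care, is carrying out this inclusion--exclusion sharply enough to (i) distinguish the two regimes cleanly at $k=s+2$, and (ii) force the equality cases into the two explicit constructions. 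Finally, to confirm the bounds are best possible, I would write out both constructions, compute their sizes, and verify the hypothesis $\bigcap_{i\in S}A_i=\emptyset$ for $|S|>m-s$ in each.
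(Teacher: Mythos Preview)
The paper does not prove Theorem~\ref{thm:HM2} at all: it is quoted verbatim from Hilton and Milner's 1967 paper as background, and the authors only use it to motivate their own main result (Theorem~\ref{thm:BeyHM}). So there is no ``paper's own proof'' to compare your proposal against.

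That said, your outline has a genuine gap at the very first step. You acknowledge that shifting need not preserve the hypothesis ``every element is missed by at least $s$ members of $\F$'', but then assert without argument that the shifted family still has $|\F_{\bar 1}|\ge s$. This is exactly what can fail: a shift $S_{1y}$ may move a set not containing~$1$ into one that does, and after full stabilisation the number of members avoiding~$1$ can drop below~$s$. Indeed, the paper's proof of its own Theorem~\ref{thm:BeyHM} (the $s=2$ analogue) is largely devoted to this obstacle: the authors explicitly halt the shifting process whenever the next shift would make the family EKR or HM at~$x$ (their Cases~(1)--(3) and Proposition~\ref{prop:adjust}), and then run a restricted shifting on $X\setminus X_i$ for a carefully chosen small set~$X_i$. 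Your sentence ``the shifted relative has $|\F_{\bar 1}|\ge s$'' hides precisely the work that makes the argument go through; without a mechanism of this kind, the decomposition $\F=\G\cup\h$ with $|\h|\ge s$ is not available after shifting.

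Beyond that, the later steps are more plan than proof. The inclusion--exclusion over $s$ chosen members $B_1,\dots,B_s\in\h$ does not by itself produce the sharp constants $\binom{n-k}{k-1}$ and $\binom{n-k-s}{k-s-1}$ unless you can force the $B_i$ into a very specific configuration (a sunflower with a $(k-1)$-core in one regime, a nested chain anchored on $\{2,\dots,k+1\}$ in the other), and you have not indicated how shiftedness delivers that structure for general~$s$. Likewise, the claim that for $2<k\le s+2$ ``the members of $\h$ must cluster on a common $(k-1)$-core'' is the heart of the matter in that regime and needs an actual argument, not just the word ``sunflower''.
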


Theorem~\ref{thm:HM2} contains the Erd\H{o}s--Ko--Rado theorem as its
special case~$s=0$, and it contains the Hilton--Milner theorem as its
special case~$s=1$.  Let us now consider Question~\ref{q1}.  Suppose
that~$k\geq4$ and~$\h$ is neither EKR nor HM.  Then~$\h$ satisfies the
hypothesis of Theorem~\ref{thm:HM2} for~$s=2$, and hence we know
that~$|\h|$ is at most as large as specified in the second bound
in~\eqref{eq:HM2}.  Unfortunately, when $k=3$, Theorem~\ref{thm:HM2}
does not give a sharp bound.  Moreover, Theorem~\ref{thm:HM2} does not
give any information about the extremal families that achieve the extremal
values.

In this note we settle Question~\ref{q1} completely.  Let us start by
noting that the case~$k=2$ is trivial.  Suppose that~$k=2$ and
that~$\h$ is not trivially intersecting.  Then~$\h$ is a triangle,
which means that $\h=\F_1$ (in
fact $\h=\F_1=\G_2$ when $k=2$).  This means that, for~$k=2$, every
intersecting family~$\h$ is either EKR or HM.  We may therefore
suppose that~$k\geq3$ in what follows.  Let us now describe the extremal
families for our theorem.

\begin{definition}[$\G(E,x_0)$, $\J(E,J,x_0)$, $\G_i$, $\J_i$]
  \label{ex:1}
  For any $i$-set $E\subseteq X$, where $2\le i\le k$, and any
  $x_0\in X\setminus E$, we define the $k$-uniform family~$\G(E,x_0)$
  on~$X$ by setting
  \[
    \G(E,x_0) = \big\{G\in{\textstyle{X\choose k}}: E\subseteq G\}
    \cup \{G\in{\textstyle{X\choose k}}: x_0\in G, \,G\cap E \neq
  \emptyset\big\}. 
  \]
  We write~$\G_i$ for any family isomorphic to a~$\G(E,x_0)$ as above.
  Now suppose $1\le i\le k-1$.  For any $(k-1)$-set $E\subseteq X$,
  any $(i+1)$-set $J\subseteq X\setminus E$ and any $x_0\in J$, 
  we define the $k$-uniform family~$\J(E,J,x_0)$ on~$X$ by setting
  \begin{multline*}
  \qquad\J(E,J,x_0)=\big\{G\in{\textstyle{X\choose k}}: E\subseteq G, \,G\cap
  J\neq\emptyset\big\}
  \cup\big\{G\in{\textstyle{X\choose k}}: J\subseteq G\big\}\\
  \cup\big\{G\in{\textstyle{X\choose k}}:x_0\in
  G,\,G\cap E\neq\emptyset\big\}.\qquad
  \end{multline*}
  We write~$\J_i$ for any family isomorphic to a~$\J(E,J,x_0)$ as above.
\end{definition}

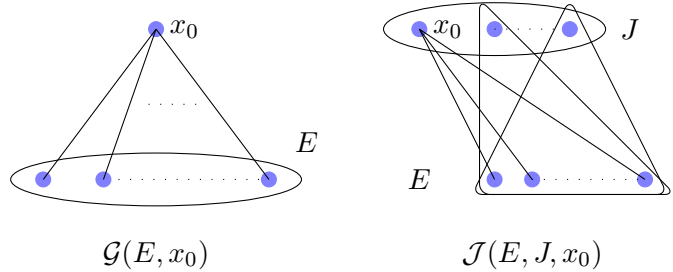
\begin{figure}[h]
\begin{center}
\begin{tikzpicture}
[inner sep=2pt,
   vertex/.style={circle, draw=blue!50, fill=blue!50},
   ]
\draw (0,0) ellipse (55pt and 10pt);
\node at (0,2) [vertex, label=right:$x_0$] {};
\node at (-1.5,0) [vertex]{};
\node at (-0.7,0) [vertex]{};
\node at (1.5,0) [vertex]{};
\node at (2,0.5) {$E$};
\draw[loosely dotted] (-0.5,0) -- (1.5,0);
\draw[loosely dotted] (-0.1,1) -- (0.6,1);
\node at (0,-1) {$\G(E,x_0)$};

\node at (3.5,2) [vertex, label=right:$x_0$] {};
\node at (4.5,0) [vertex]{};
\node at (5,0) [vertex]{};
\node at (6.5,0) [vertex]{};

\node at (3.5,0) {$E$};
\node at (6.3,2) {$J$};

\node at (4.5,2) [vertex]{};
\draw[loosely dotted] (4.5,2) -- (5.5,2);
\node at (5.5,2) [vertex]{};

\draw[loosely dotted] (5,0) -- (6.5,0);
\node at (5,-1) {$\J(E,J,x_0)$};
\draw (0,2) -- (-1.5,0);
\draw (0,2) -- (-0.7,0);
\draw (0,2) -- (1.5,0);
\draw (3.5,2) -- (4.5,0);
\draw (3.5,2) -- (5,0);
\draw (3.5,2) -- (6.5,0);
\draw [rounded corners] (4.3,2.4) -- (4.3,-0.2) -- (6.9,-0.2) -- cycle;
\draw [rounded corners] (5.5,2.4) -- (4.2,-0.2) -- (6.8,-0.2) -- cycle;
\draw (4.5,2) ellipse [x radius=42pt, y radius=10pt];
\end{tikzpicture}

\caption{Extremal families: $\G(E,x_0)$ and $\J(E,J,x_0)$. Each family consists of all $k$-sets that contain some pair (line segment) or set (ellipse or triangle) in the picture.}
\end{center}
\end{figure}

Note that $\J_1 = \F_1 = \G_k$, the Hilton--Milner family.  We also
remark that the families~$\G_i$ above appear as the extremal families
in a result of Frankl~\cite{frankl87:_erd_ko_rado} that generalizes
the Hilton--Milner theorem.  Our main result is as follows.

\begin{theorem}\label{thm:BeyHM}
  Suppose $k\ge3$ and $n>2k$ and let $\h$ be an intersecting
  $k$-uniform family on $X$. Furthermore, assume that
  $\h\nsubseteq_S \F_0$, $\h\nsubseteq_S \F_1$ and, if $k=3$,
  $\h\nsubseteq_S \G_2$.  Then
  \begin{equation}
    \label{eq:BeyHM}
    |\h|\le \binom{n-1}{k-1} - \binom{n-k-1}{k-1} - \binom{n - k -
    2}{k-2} + 2.
  \end{equation}
  For $k=4$, equality holds if and only if $\h = \J_2$, $\G_{2}$ or
  $\G_{3}$; for every other~$k$, equality holds if and only if $\h = \J_2$.
\end{theorem}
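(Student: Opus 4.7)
My plan is to use the shifting method to reduce to a left-compressed family, and then analyze its structure by partitioning into sets containing or avoiding a designated vertex. I first apply the standard shift operators $\Delta_{ij}$ ($i<j$), which preserve $k$-uniformity, the intersecting property, and $|\h|$. A minor subtlety is that a shift may turn $\h$ into a subfamily of $\F_0$, $\F_1$, or (for $k=3$) $\G_2$, violating the hypotheses. I would deal with this via the now-standard argument in shifting proofs of Hilton--Milner-type theorems: either only apply shifts that preserve the hypotheses, or use the specific structure forced by a ``fatal shift'' to bound $|\h|$ directly.  So we may henceforth assume $\h$ is shifted.

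Now split $\h = \h_1 \sqcup \Q$ with $\h_1 = \{F \in \h : 1 \in F\}$ and $\Q = \{F \in \h : 1 \notin F\}$. Non-EKR gives $|\Q| \ge 1$, and non-HM gives $|\Q| \ge 2$. Since $\h$ is shifted, $\Q$ must contain both $T_1 = \{2, \dots, k+1\}$ and $T_2 = \{2, \dots, k, k+2\}$: any $F \in \Q$ dominates $T_1$ elementwise (after sorting), and any $F \in \Q$ with $F \ne T_1$ further dominates $T_2$. Since $|T_1 \cap T_2| = k-1$, inclusion-exclusion applied to the conditions $F \cap T_i \ne \emptyset$ ($i=1,2$) for $F \in \h_1$ gives
\begin{equation*}
|\h_1| \le \binom{n-1}{k-1} - 2\binom{n-k-1}{k-1} + \binom{n-k-2}{k-1} = \binom{n-1}{k-1} - \binom{n-k-1}{k-1} - \binom{n-k-2}{k-2},
\end{equation*}
where the second equality follows from Pascal's identity. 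When $|\Q|=2$, adding $|\Q|$ already yields exactly the target bound.

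The main work is to handle $|\Q| \ge 3$. In this case the additional sets in $\Q$ impose further meeting constraints on $\h_1$, tightening the inclusion-exclusion bound. I would perform a case analysis on the structure of $\Q$: it may be a sunflower with $(k-1)$-core (which for $|\Q| = 2$ produces $\J_2$ and for maximum $|\Q|$ produces $\G_3$), a sunflower with a smaller core or a ``book''-type structure (producing $\G_2$), or a more complex intersecting configuration. The goal is to show that the decrease in the bound on $|\h_1|$ always dominates the growth of $|\Q|$, except precisely in the configurations that yield the listed extremal families. For $k = 3$, a sunflower $\Q$ with at least $3$ petals forces $\h \subseteq \G_2$, which is excluded by hypothesis, so $|\Q|=2$ and $\h = \J_2$ is uniquely extremal. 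For $k = 4$, an algebraic coincidence of binomial coefficients makes $\G_2$ and $\G_3$ additional extremal families.

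The extremal characterization is obtained by tracking tightness in the above inclusion-exclusion and structural bounds. The main obstacle will be the case analysis for $|\Q| \ge 3$: one has to verify, in each structural case, that either the decrease in $|\h_1|$ covers the increase in $|\Q|$, or $\h$ collapses into an excluded family. The ``fatal shift'' technicality at the start and the precise identification of extremal families in the sporadic $k=3$ and $k=4$ cases will require particular care.
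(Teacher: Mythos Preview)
Your approach is genuinely different from the paper's and, for the case $|\Q|=2$, considerably slicker: the paper instead restricts attention to traces on a set $Y$ of size $2k$ (or $2k+1$), defines $\A_i=\{E\cap Y:E\in\G,\ |E\cap Y|=i\}$, bounds each $|\A_i|$ separately by induction on~$k$, and then sums $\sum_i|\A_i|\binom{n-|Y|}{k-i}$ via Vandermonde. Your direct inclusion--exclusion with $T_1,T_2$ bypasses all of this when $|\Q|=2$.

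That said, there are two places where you are underestimating the work required. First, the ``fatal shift'' issue is not as standard here as in the Hilton--Milner proof. The paper spends a full proposition (their Proposition~2.3) on this and still does \emph{not} obtain a fully shifted family: it obtains a family $\G$ that is shifted only on $X\setminus X_i$ for some exceptional set $X_i$ of size up to~$4$, together with the side condition that every edge meets $X_i$. Your identification of $T_1=\{2,\dots,k+1\}$ and $T_2=\{2,\dots,k,k+2\}$ hinges on full shifting, so the sentence ``we may henceforth assume $\h$ is shifted'' hides a real obstacle. You would need either to push the fatal-shift analysis further than the paper does (to recover full shifting), or to rework your $T_1,T_2$ argument to tolerate partial shifting.

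Second, for the extremal characterization, ``tracking tightness'' only identifies the \emph{shifted} extremal family~$\G$; you still have to show that the original~$\h$ is isomorphic to it. The paper devotes a substantial lemma (their Lemma~3.1) to proving that shifts are ``stable'' on $\J_2$, $\G_2$, and $\G_{k-1}$, i.e.\ that $S_{xy}(\h)\cong\F$ forces $\h\cong\F$ for each of these $\F$. This reverse step is where the case analysis is most delicate, and your proposal does not address it.

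The $|\Q|\ge3$ case analysis you flag as the main obstacle is real but probably tractable along the lines you sketch; the two points above are where your outline diverges most from what a complete proof actually requires.
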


We now make some remarks on Theorem~\ref{thm:BeyHM}.  Let us first of
all mention that a quick calculation shows that the right-hand side
of~\eqref{eq:BeyHM} is equal to the second bound in~\eqref{eq:HM2}
with~$s=2$.  We now note that Theorem~\ref{thm:BeyHM} implies an
Erd\H os--Ko--Rado type theorem with a maximum degree condition.  For any
family~$\h$ on~$X$ and $x\in X$, let $d_\h(x)$ be the number of sets
in $\h$ containing~$x$. Let $d(\h)=\max_{x\in X}d(x)$. In
\cite{frankl87:_erd_ko_rado}, Frankl showed that, for any $k$-uniform
intersecting family $\h$ on~$[n]$ and $2\le i\le k$, if
$d(\h) \le d(\G_i)$, then $|\h| \le |\G_i|$.  When $k\ge 4$, this
generalizes the Hilton--Milner theorem because the case $i=k$ is
equivalent to the Hilton--Milner theorem.  Theorem~\ref{thm:BeyHM}
also gives such a maximum degree form for $k\ge 4$.

\begin{corollary}\label{thm:MaxDeg}
  Suppose~$k\ge 4$ and let~$\h$ be a $k$-uniform intersecting family 
  on~$X$.  If $d(\h) \le d(\J_2)$, then $|\h| \le |\J_2|$.  Moreover,
  if~$k\ge 5$, equality holds if and only if~$\h = \J_2$.
\end{corollary}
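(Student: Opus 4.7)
The plan is to reduce Corollary~\ref{thm:MaxDeg} to Theorem~\ref{thm:BeyHM} by a short case analysis, the only numerical input being the identity $d(\J_2)=|\J_2|-2$. To establish this identity, write $\J_2=\J(E,J,x_0)$ as in Definition~\ref{ex:1}. The two sets $E\cup\{j\}$ for $j\in J\setminus\{x_0\}$ are the only members of $\J_2$ that avoid $x_0$ (the third set $E\cup\{x_0\}$ contains $x_0$; every type-B set contains $x_0\in J$; every type-C set contains $x_0$ by definition), which gives $d(x_0)=|\J_2|-2$. For every other $x\in X$ one finds at least two members of $\J_2$ missing $x$ by a short case split: if $x\in E$, the $\binom{n-4}{k-3}\ge 2$ type-B sets $J\cup S$ with $x\notin S$ work (using $k\ge 4$ and $n>2k$); if $x\in J\setminus\{x_0\}$, two of the three type-A sets miss $x$; and if $x\notin E\cup J$, all three type-A sets miss $x$. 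Hence $d(\J_2)=d(x_0)=|\J_2|-2$.

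Now split on the type of $\h$. \textbf{Case 1:} $\h$ is trivially intersecting, with common element $x^\star$. Then $|\h|=d_\h(x^\star)\le d(\h)\le d(\J_2)=|\J_2|-2$. \textbf{Case 2:} $\h$ is non-trivially intersecting and HM, i.e.\ $\h\subseteq_S\F_1$ for some $\F(F,x)$. Non-triviality of $\h$ forces $F\in\h$ (otherwise every set of $\h$ contains $x$), and every set of $\F_1\setminus\{F\}$ contains $x$, so $d_\h(x)=|\h|-1\le d(\h)\le d(\J_2)=|\J_2|-2$, giving $|\h|\le|\J_2|-1$. \textbf{Case 3:} $\h$ is neither EKR nor HM. Since $k\ge 4$, Theorem~\ref{thm:BeyHM} applies and yields $|\h|\le|\J_2|$, with equality forcing $\h=\J_2$ whenever $k\ge 5$.

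Combining the three cases gives $|\h|\le|\J_2|$ in all situations, with strict inequality in Cases~1 and~2. Consequently, for $k\ge 5$, equality can occur only in Case~3, where Theorem~\ref{thm:BeyHM} forces $\h=\J_2$. I expect the only genuinely computational step to be the identity $d(\J_2)=|\J_2|-2$, which requires a short case analysis over the four vertex types of $\J(E,J,x_0)$; the rest is an immediate consequence of the hypothesis $d(\h)\le d(\J_2)$ together with the two-set slack exploited in Cases~1 and~2. (The restriction $k\ge 5$ in the equality statement is essential: for $k=4$ one has $|\G_2|=|\G_3|=|\J_2|$, and a direct computation shows $d(\G_i)\le d(\J_2)$ for $i\in\{2,3\}$, so these families are also extremal and cannot be excluded here.)
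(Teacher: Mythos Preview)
Your proof is correct and follows essentially the same approach as the paper: both split into the cases $\h\subseteq_S\F_0$, $\h\subseteq_S\F_1$, and otherwise, using $|\h|\le d(\h)+1$ in the first two cases and Theorem~\ref{thm:BeyHM} in the third. You supply more detail than the paper does---in particular the explicit verification that $d(\J_2)=|\J_2|-2$---whereas the paper simply asserts $d(\J_2)+1<|\J_2|$, but the argument is the same.
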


\begin{proof}
If $\h$ satisfies the assumptions of Theorem~\ref{thm:BeyHM}, then $|\h| \le |\J_2|$. If $\h\subseteq_S\F_0$ or $\h\subseteq_S\F_1$, then we have $|\h| \le d(\h) +1 \le d(\J_2) +1 < |\J_2|$.
Clearly, if~$k\geq5$ equality holds only if~$\h$ satisfies the
assumptions of Theorem~\ref{thm:BeyHM} and, therefore, $\h=\J_2$.
\end{proof}

For another Erd\H{o}s--Ko--Rado type theorem with conditions on the
maximum degree, see F\"u\-re\-di~\cite{Furedi78}.  We also remark that
there is another way of ``going beyond'' the Hilton--Milner theorem,
namely, Frankl~\cite{Frankl80} investigated the maximum size of a
$k$-uniform intersecting family whose transversal number (the minimum
size of a vertex cover) is more than~$2$.  We close by observing that
``our way'' of going beyond Hilton--Milner has very recently been
considered by Jackowska, Polcyn and Ruci\'{n}ski~\cite{JPR15}, in a
more general form (those authors define a certain hierarchy of Tur\'an
numbers; see~\cite{JPR15}).

\section{Proof of the quantitative part of Theorem~\ref{thm:BeyHM}}

We use the \emph{shifting technique} in~\cite{EKR, FF86}.  Readers not
familiar with shifting are strongly encouraged to study~\cite{frankl87:_shifting}.
Here we give the definition of the shifting operator and briefly state some basic facts. 
To define shifting, we need to suppose that the elements of~$X$ are
given some linear order. 
For $x, y\in X$, $x<y$, we define $S_{xy}(\h) = \{S_{xy}(E) : E\in \h \}$, where
\[
S_{xy} (E) = \begin{cases}
(E \setminus \{y\})\cup \{x\} & \text{ if } x\notin E, y\in E, (E \setminus \{y\})\cup \{x\} \notin \h, \\
E  &\text{ otherwise.}
\end{cases}
\]

\begin{proposition}\cite{EKR}
  We have $|S_{xy}(\h)| = |\h|$.  Moreover, $S_{xy}(\h)$ is
  intersecting if $\h$ is intersecting.
\end{proposition}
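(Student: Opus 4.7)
The plan is to establish both assertions through short case analyses that track whether each set in question is fixed by $S_{xy}$ or is replaced by its shift $(E\setminus\{y\})\cup\{x\}$. Throughout, I would use the fact, immediate from the definition, that a set $E\in\h$ is left fixed precisely when at least one of the conditions $x\in E$, $y\notin E$, or $(E\setminus\{y\})\cup\{x\}\in\h$ holds.

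For the cardinality equality, it suffices to show that the map $E\mapsto S_{xy}(E)$ is injective on $\h$, since $S_{xy}(\h)$ is by definition its image. Assuming $S_{xy}(E_1)=S_{xy}(E_2)$ for distinct $E_1,E_2\in\h$, I would split into three cases. If both are fixed, then trivially $E_1=S_{xy}(E_1)=S_{xy}(E_2)=E_2$. If both are shifted, then cancelling the added $x$ and restoring the removed $y$ (which belongs to both) recovers $E_1=E_2$. In the mixed case, say $E_1$ fixed and $E_2$ shifted, one gets $E_1=(E_2\setminus\{y\})\cup\{x\}\in\h$, which directly contradicts the third defining condition for the shift having been applied to $E_2$.

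For the intersecting property, I would pick arbitrary $F_1,F_2\in S_{xy}(\h)$ with preimages $E_1,E_2\in\h$ and once again split into three cases. If neither $E_i$ is shifted, then $F_1\cap F_2=E_1\cap E_2\neq\emptyset$ since $\h$ is intersecting. If both are shifted, then $x\in F_1\cap F_2$. The only subtle case is the mixed one, which I expect to be the main obstacle.

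In the mixed case, say $F_1=E_1$ and $F_2=(E_2\setminus\{y\})\cup\{x\}$, suppose for contradiction that $F_1\cap F_2=\emptyset$. A quick check forces $E_1\cap E_2=\{y\}$ and $x\notin E_1$. The standard resolution is to exploit that $E_1$ was \emph{not} shifted even though $y\in E_1$ and $x\notin E_1$: by the definition of $S_{xy}$, the set $E_1' := (E_1\setminus\{y\})\cup\{x\}$ must already belong to $\h$. Since $\h$ is intersecting, $E_1'\cap E_2\neq\emptyset$; but computing directly, $(E_1\setminus\{y\})\cap E_2=\emptyset$ (from $E_1\cap E_2=\{y\}$) and $\{x\}\cap E_2=\emptyset$ (since $E_2$ was shifted, so $x\notin E_2$), giving $E_1'\cap E_2=\emptyset$, a contradiction. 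Hence $F_1\cap F_2\neq\emptyset$ in all cases, completing the plan.
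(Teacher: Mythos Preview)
Your argument is correct and is the standard proof of this classical fact. Note, however, that the paper does not actually supply its own proof of this proposition: it is stated with a citation to~\cite{EKR} and left unproved. So there is nothing in the paper to compare your approach against; you have simply filled in the omitted details in the expected way.
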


Let $\h$ be an intersecting $k$-uniform family on $X=[n]$.
To prove Theorem~\ref{thm:EKR}, we apply the shifting operator
$S_{xy}$ repeatedly to $\h$ for all $1\le x<y\le n$ until we get a
\emph{stable} family $\G$, i.e., such that $S_{xy}(\G) = \G$ holds for all $1\le x<y\le n$.
We note that the shifting process must terminate and thus we always reach a stable family.
Indeed, note that for any shift $S_{xy}$ on $\h$, if $S_{xy}(E)\neq E$, then the sum of the elements in $S_{xy}(E)$ is strictly smaller than the sum of the elements in $E$ (because we replaced $y$ by $x$ and $x<y$).
Thus the sum, over all edges of~$\h$, of all such sums strictly decreases unless $S_{xy}(\h)=\h$.
So the shifting process must terminate.

We say a family $\h$ is \emph{EKR $($or HM\,$)$ at $x\in X$} if $\h\subseteq_S \F_0$ (or $\h\subseteq_S \F_1$) where $x$ is mapped to the center of $\F_0$ (or $\F_1$).
We say a family $\h$ is HM at $\{x, y, z\}\subseteq X$ if
$\h\subseteq_S \G_2$ where $\{x, y, z\}$ is mapped onto the set $\{x\}\cup E$ of $\G_2$ in Definition~\ref{ex:1}.

Throughout this section, we will use the following fact, whose proof is trivial.

\begin{fact}\label{fact:HM}
Let $\h$ be a $k$-uniform intersecting family on $X$ and let $x\in X$. Then $\h$ is neither EKR nor HM at $x$ if and only if there are $E$, $E'\in \h$ such that $x\notin E$, $x\notin E'$.
\end{fact}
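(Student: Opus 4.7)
The plan is to prove the equivalence by reducing each of the two notions to a transparent combinatorial condition on how many members of~$\h$ omit~$x$. Specifically, I would first establish two auxiliary characterizations: (i)~$\h$ is EKR at~$x$ if and only if $x\in E$ for every $E\in\h$; and (ii)~$\h$ is HM at~$x$ if and only if at most one $E\in\h$ satisfies $x\notin E$. Once (i) and (ii) are in hand, the Fact is immediate: ``neither EKR nor HM at~$x$'' translates to ``more than one set of~$\h$ omits~$x$'', which is precisely the asserted existence of $E,E'\in\h$ with $x\notin E$ and $x\notin E'$.

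For (i), if $f$ witnesses $\h\subseteq_S \F_0$ with $f(x)$ equal to the center of the target~$\F_0$, then $f(x)\in f(E)$ for each $E\in\h$, and injectivity of~$f$ gives $x\in E$. Conversely, if every $E\in\h$ contains~$x$, then $\h\subseteq \F(x)$ and the identity map (with~$\F_0=\F(x)$) provides the witness. For the forward direction of (ii), let $f$ witness $\h\subseteq_S \F(F,x_0)$ with $f(x)=x_0$. For each $E\in\h$, either $f(E)=F$, which by injectivity of~$f$ happens for at most one member of~$\h$, or else $x_0=f(x)\in f(E)$, whence injectivity forces $x\in E$; so at most one set of~$\h$ omits~$x$.

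For the converse in (ii), the one positive step is to actually exhibit an HM embedding. Suppose at most one member of~$\h$ omits~$x$. If none does, then (i) already gives that $\h$ is EKR at~$x$, so the Fact holds. If exactly one $F\in\h$ omits~$x$, then the intersecting property of~$\h$ gives $G\cap F\neq\emptyset$ for every $G\in\h\setminus\{F\}$, and such a~$G$ also contains~$x$, hence $G\in\F(F,x)$; together with $F\in\F(F,x)$ this yields $\h\subseteq \F(F,x)$. Since $x\notin F$, the family $\F(F,x)$ is a legitimate instance of~$\F_1$ with center~$x$, and the identity injection on~$X$ witnesses that~$\h$ is HM at~$x$. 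The only thing that needs care is bookkeeping the roles of the centers and of the distinguished set~$F$ under the injections used in the definition of~$\subseteq_S$, but there is no real obstacle once (i) and (ii) are isolated; the entire argument is a formal unpacking of the definitions plus a one-line use of the intersecting hypothesis.
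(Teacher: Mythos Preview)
Your argument is correct and essentially what the paper has in mind; the paper simply declares the proof ``trivial'' and omits it, so there is nothing to compare at the level of strategy.

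One small expository point: your auxiliary claim~(ii) as stated is not literally an ``if and only if''. If \emph{no} member of~$\h$ omits~$x$ and $\h$ is large (say $\h=\F(x)$), then $\h$ is EKR at~$x$ but \emph{not} HM at~$x$: any injective $f$ with $f(x)=x_0$ would have to send every $(k-1)$-subset of $X\setminus\{x\}$ to a set meeting the distinguished $k$-set~$F$, which is impossible once $n>2k$. You clearly noticed this, since in that case you abandon proving~(ii) and instead invoke~(i) to conclude the Fact directly. That is perfectly fine for the Fact, but you should rephrase the plan accordingly: what you actually establish is that HM at~$x$ implies at most one set omits~$x$, and conversely that ``at most one set omits~$x$'' implies ``EKR at~$x$ or HM at~$x$''. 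With that correction the write-up is clean.
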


Let $\h$ be an intersecting family of maximal size such that
$\h\nsubseteq_S \F_0$, $\h\nsubseteq_S \F_1$ and if $k=3$,
$\h\nsubseteq_S \G_2$.  We prove the statement by
induction on $k\ge 2$.  The base case $k=2$ is trivial, because there
is no such $\h$.  For the case $k=3$ and $n=2k+1=7$, note that the HM
family has size 13, which implies $|\h|\le 12$, as desired (we will
show the uniqueness of this case in Section 3.3). Hence, throughout this section, when $k=3$, we may assume that $n \ge 2k+2=8$.

We plan to apply repeatedly the shifting operator $S_{xy}$ to $\h$ for all $1\le x<y\le n$. But we may fall into trouble if the family after shifting becomes a subfamily of $\F_0$ or $\F_1$ (or $\G_2$ if $k=3$). We observe the following facts.

\begin{fact}\label{fact:shift}
For any $\h'\nsubseteq_S \F_0$, $\h'\nsubseteq_S \F_1$ and if $k=3$, $\h'\nsubseteq_S \G_2$, we have the following.
\begin{itemize}
\item[$(i)$] If $S_{xy}(\h')$ is EKR (or HM) at some element, then $S_{xy}(\h')$ is EKR (or HM) at $x$.
\item[$(ii)$] If $S_{xy}(\h')$ is HM at a 3-set $A$, then $A=\{x,x_1,x_2\}$ for some $x_1, x_2\in X\setminus\{x, y\}$.
\end{itemize}
\end{fact}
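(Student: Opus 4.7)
\emph{Proof plan.} Both parts are proved by contradiction, leveraging the same structural observation: the operator $S_{xy}$ only modifies the two elements $x$ and $y$, so any EKR or HM witness for $S_{xy}(\h')$ that ignores $x$ (i.e., is centred at $y$ or at an element of $X\setminus\{x,y\}$) pulls back through $S_{xy}$ to an EKR or HM witness for $\h'$ itself, violating the standing hypothesis $\h'\nsubseteq_S\F_0$, $\h'\nsubseteq_S\F_1$, and (when $k=3$) $\h'\nsubseteq_S\G_2$.

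For part~(i), suppose $S_{xy}(\h')$ is EKR (resp.\ HM) at some $z$, and split cases on whether $z\notin\{x,y\}$, $z=y$, or $z=x$. If $z\notin\{x,y\}$: for any $E\in\h'$, either $E$ is unchanged, so $E\in S_{xy}(\h')$ and $z\in E$; or $E$ is shifted to $E'=(E\setminus\{y\})\cup\{x\}\in S_{xy}(\h')$, so $z\in E'$, and since $z\ne x$ this forces $z\in E$. Hence $\h'$ is EKR at $z$, contradiction. The HM sub-case is handled identically, with the special edge of $\h'$ at $z$ being either the (unchanged) special edge $F^*$ of $S_{xy}(\h')$, or, if $F^*=(E^*\setminus\{y\})\cup\{x\}$ arose from a shift, the unique pre-image $E^*\in\h'$. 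If $z=y$: any unchanged edge of $\h'$ missing $y$ would persist in $S_{xy}(\h')$ without $y$, so at most one edge of $\h'$ misses $y$; and since each shifted image also misses $y$, there is at most one shifted edge. A short count then concludes that in fact \emph{every} edge of $\h'$ contains $y$, making $\h'$ EKR (resp.\ HM) at $y$, contradiction. Thus $z=x$.

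For part~(ii), assume $S_{xy}(\h')$ is HM at a $3$-set $A$, i.e., $|E\cap A|\ge 2$ for every $E\in S_{xy}(\h')$, and rule out the three configurations with $x\notin A$ or $y\in A$. If $y\in A$ and $x\notin A$, every shifted edge satisfies $E'\cap A=(E\cap A)\setminus\{y\}$, so $|E\cap A|=|E'\cap A|+1\ge 3$. If $y,x\in A$, a shift swaps $y$ for $x$ inside $A$ and preserves $|E\cap A|$. If $y,x\notin A$, the shift preserves $E\cap A$ outright. In each configuration $|E\cap A|\ge 2$ for every $E\in\h'$, so $\h'\subseteq_S\G_2$ with witness $A$, contradicting the hypothesis. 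The only remaining configuration is $x\in A$ and $y\notin A$, i.e.\ $A=\{x,x_1,x_2\}$ with $x_1,x_2\in X\setminus\{x,y\}$, as claimed.

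The main obstacle is purely bookkeeping: the HM sub-case of~(i) requires handling separately the two possibilities for $F^*$ (unchanged versus the image of a shift), and the $z=y$ sub-case requires a small count to see that at most one edge of $\h'$ can be shifted by $S_{xy}$. Beyond this, no new idea is needed past the observation that $S_{xy}$ acts only on $\{x,y\}$ and therefore preserves every intersection disjoint from this pair.
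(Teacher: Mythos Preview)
Your approach is correct in outline but takes a genuinely different route from the paper's. The paper argues \emph{forward}: invoking Fact~\ref{fact:HM}, for any $z\in X\setminus\{x\}$ there are two edges $E,E'\in\h'$ missing $z$; since the shift $S_{xy}$ can only add the element $x$ to a set (and removes only $y$), the images $S_{xy}(E),S_{xy}(E')$ still miss $z$, so $S_{xy}(\h')$ is neither EKR nor HM at $z$. This handles all $z\neq x$ uniformly in two lines, without distinguishing $z=y$ from $z\notin\{x,y\}$ and without tracking the special HM edge. Part~(ii) is done the same way: take one $E\in\h'$ with $|E\cap A|\le1$ and observe that $|S_{xy}(E)\cap A|$ can only increase if $x\in A$ and $y\notin A$. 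Your pullback strategy is sound but costs you the case split on $z$ and the bookkeeping for the HM special edge; the paper's pushforward via Fact~\ref{fact:HM} avoids both.

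One small slip to fix: in your $z=y$ sub-case you conclude that ``in fact every edge of $\h'$ contains $y$.'' That is not true in the configuration where no edge is shifted and exactly one unchanged edge of $\h'$ misses $y$; there $\h'=S_{xy}(\h')$ and one edge of $\h'$ does miss $y$. The contradiction still goes through (in that configuration $\h'$ is HM at $y$), but your stated intermediate claim is false. Either patch the sentence or, better, drop the $z=y$ case entirely: the paper's forward argument shows it is no different from $z\notin\{x,y\}$, since an edge missing $y$ is fixed by $S_{xy}$.
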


\begin{proof}
For any $z\in X\setminus \{x\}$, by Fact~\ref{fact:HM}, there are at least two edges in $\h'$ that do not contain $z$. 
Clearly, after the shift $S_{xy}$, these edges do not contain $z$ as well. So $S_{xy}(\h')$ is neither EKR nor HM at $z$ and $(i)$ follows.

For $(ii)$, since $\h'$ is not HM at any 3-set $\{x_0, x_1, x_2\}$, there exists an edge $E\in \h'$ such that $|E\cap \{x_0, x_1, x_2\}|\le 1$. 
If $S_{xy}(\h')$ is HM at $\{x_0, x_1, x_2\}$, then $|S_{xy}(E)\cap \{x_0, x_1, x_2\}|\ge 2$. This happens only if one of $\{x_0, x_1, x_2\}$ is $x$ and none of them is $y$. We may assume that $x_0=x$ and thus $(ii)$ follows.
\end{proof}

Note that $\G_2$ is the extremal family in Theorem~\ref{thm:HM} only for $k=3$. So when $k\ge 4$, if we get a family $\h'$ such that $S_{xy}(\h')$ is HM at a 3-set, then we continue the shifting process.
By Fact~\ref{fact:shift}, if we apply~$S_{xy}$ ($x<y$) repeatedly to~$\h$, then we obtain a family in one of the following four cases,
\begin{itemize}
\item[(0)] a family $\G$ which is stable, i.e., $S_{xy}(\G) = \G$ holds for all $x<y$,
\item[(1)] a family $\h_1$ such that $S_{xy}(\h_1)$ is EKR at $x$,
\item[(2)] a family $\h_2$ such that $S_{xy}(\h_2)$ is HM at $x$, or
\item[(3)] (for $k=3$ only) a family $\h_3$ such that $S_{xy}(\h_3)$ is HM at $\{x, x_1, x_2\}$ for some $x_1, x_2\in X\setminus \{x, y\}$.
\end{itemize}

In Cases (1) -- (3), we will \textit{not} apply the shift $S_{xy}$ --
otherwise we will get a family whose size is out of our control.
Instead, we will adjust our shifting as shown in the following
proposition.

\begin{proposition}\label{prop:adjust}
In Case $(i)$, for $i=1,2,3$, there is a set $X_i\subset X$ of size at most 4 such that for all $x', y'\in X \setminus X_i$, $x'<y'$, we can apply (repeatedly) all the shifts $S_{x'y'}$, i.e., we will not be in any of Cases (1) -- (3) and the resulting family $\G$ satisfies that $S_{x'y'}(\G)=\G$ for all $x', y'\in X \setminus X_i$, $x'<y'$. Moreover, $E\cap X_i \neq \emptyset$ for all $E\in \G$ and when $k\ge 4$, the sets $X_i$ can be chosen of size at most 3.
\end{proposition}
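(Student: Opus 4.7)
The plan is a case-by-case analysis of Cases~(1)--(3): in each case I identify $X_i$ explicitly and verify three things: (a)~every edge of $\h_i$ meets $X_i$; (b)~any shift $S_{x'y'}$ with $x',y'\notin X_i$ preserves~(a); and (c)~no such shift returns the family to any of Cases~(1)--(3). Once (a)--(c) are in place, the monovariant underlying the shifting termination argument (the total sum of edge-element sums strictly decreases under any non-trivial shift) guarantees that the restricted shifting process terminates, producing the desired family~$\G$.

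For Case~(1), I take $X_1=\{x,y\}$. An edge $E\in\h_1$ with $x\notin E$ either is shifted by $S_{xy}$ (so $y\in E$) or is kept because $(E\setminus\{y\})\cup\{x\}\in\h_1$, which again forces $y\in E$; hence every edge of $\h_1$ meets $X_1$, giving~(a). Part~(b) is immediate, since a shift whose coordinates avoid $\{x,y\}$ does not alter whether any edge contains $x$ or $y$. For~(c), Fact~\ref{fact:shift} reduces the task to ruling out EKR/HM at~$x'$ (or on a $3$-set containing $x'$ when $k=3$); the edges of $\h_1$ containing $x$ but not $y$, respectively $y$ but not $x$, can be tracked through shifts outside $X_1$ and, via Fact~\ref{fact:HM}, supply the required non-EKR and non-HM witnesses at every vertex $x'\notin X_1$.

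For Case~(2), $S_{xy}(\h_2)\subseteq_S\F(F,x)$ for some $k$-set~$F$ with $x\notin F$. Since $\h_2\nsubseteq_S\F_1$, there is an edge $E^*\in\h_2$ with $x\notin E^*$ and $E^*\neq F$; tracing the shift forces $y\in E^*$ and $E^*\cap(F\setminus\{y\})\neq\emptyset$. I pick $z\in E^*\cap(F\setminus\{y\})$ and set $X_2=\{x,y,z\}$. Edges of $\h_2$ containing~$x$ meet $X_2$ trivially; edges not containing~$x$ either equal~$F$ (which contains~$z$) or contain~$y$ (as they must then be shifted), so all edges meet~$X_2$. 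Parts~(b) and~(c) are verified as in Case~(1), with the extra coordinate~$z$ providing the additional witness needed to block any new HM configuration centred at a vertex outside~$X_2$.

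Case~(3) arises only for $k=3$: here $S_{xy}(\h_3)$ is HM at the $3$-set $\{x,x_1,x_2\}$ for distinct $x_1,x_2\in X\setminus\{x,y\}$, and I take $X_3=\{x,y,x_1,x_2\}$. Each edge of the HM-at-$3$-set family contains at least two of $\{x,x_1,x_2\}$, so inverting the shift yields $E\cap X_3\neq\emptyset$ for every $E\in\h_3$, and (b) and~(c) follow as before. The main obstacle throughout is part~(c), most acutely in Case~(2): because the HM condition is combinatorial rather than purely degree-based, ruling out that a later shift $S_{x'y'}$ outside~$X_i$ creates a new HM configuration centred at $x'$ requires locating an edge of $\h_i$ whose shape remains incompatible with HM at~$x'$ under any such shift, which is exactly the role of the third coordinate~$z$ in Case~(2) and explains why $|X_2|\le 3$ (rather than~$\le 2$) is the correct bound for $k\ge 4$.
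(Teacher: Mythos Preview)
Your choices of $X_i$ and the verification of (a) and (b) match the paper and are fine.  The gap is in~(c): you never invoke the \emph{maximality} of $|\h|$, and your ``tracking'' argument does not do the job.  You claim that ``the edges of $\h_1$ containing $x$ but not $y$, respectively $y$ but not $x$, can be tracked through shifts outside $X_1$ and \dots\ supply the required non-EKR and non-HM witnesses at every vertex $x'\notin X_1$.''  But an edge containing $x$ but not $y$ may perfectly well contain $x'$; and the two edges of the current family $\h'$ avoiding $x'$ that Fact~\ref{fact:HM} guarantees may both contain $y'$ and be genuinely shifted to edges containing~$x'$, so that $S_{x'y'}(\h')$ ends up EKR or HM at~$x'$.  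Nothing in your outline rules this out.  The paper's mechanism is different: since every edge of $\h_i$ meets $X_i$, any $k$-set containing all of $X_i$ can be added without spoiling the intersecting or non-EKR/non-HM properties, so by maximality all such $k$-sets are already present; they are fixed by any shift $S_{x'y'}$ with $x',y'\notin X_i$, and there are at least $\binom{n-|X_i|-1}{k-|X_i|}\ge 2$ of them avoiding any given $x'\notin X_i$.  This is what furnishes the witnesses uniformly, and it is not reproduced by your argument.

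Your treatment of $k=3$ is also incomplete.  You take $|X_1|=2$ and $|X_2|=3$ for all $k$, but for $k=3$ this is not enough: even with the maximality argument in place, during the restricted shifting in Case~(1) one can still fall into Case~(3), and the paper must then enlarge $X_1$ to a $4$-set by adjoining two further elements $y',z'$ chosen from a specific witness triple; similarly in Case~(2) for $k=3$ the paper takes $|X_2|=4$.  In each of these sub-cases the paper uses a codegree observation (Fact~\ref{fact:33}) together with particular triples guaranteed by maximality to pin down exactly which $3$-set $A$ could appear and then exhibits an edge that violates HM at~$A$.  None of this analysis is present in your sketch; your single sentence ``(b) and (c) follow as before'' does not cover it.
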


\begin{proof}
We first assume $k\ge 4$. In Case (1), define $X_1 = \{x, y\}$ and note that for any $E\in \h_1$, $E\cap X_1\neq \emptyset$.
Indeed, for any $E\in \h_1$, since $S_{xy}(E)$ contains $x$, we know that if $x\notin E$, then $y\in E$.
In Case (2), observe that $S_{xy}(\h_2)$ contains exactly one edge $E_0=\{z_1, \dots, z_k\}$ that does not contain~$x$.
Without loss of generality, assume that $z_1\neq y$ and let $X_2 = \{x, y, z_1\}$. 
Also note that for any $E\in \h_2\setminus \{E_0\}$, we have $E\cap\{x, y\}\neq \emptyset$. Thus, for any $E\in \h_2$, $E\cap X_2 \neq \emptyset$.

For $i=1,2$, apply repeatedly $S_{x'y'}$ to the family for $x'<y'$, $x', y'\in X \setminus X_i$ and we claim that we will reach a family $\G$ such that $S_{x'y'}(\G)=\G$ for any $x', y'\in X \setminus X_i$, $x'<y'$.
Indeed, by Fact~\ref{fact:HM}, it suffices to show that in each step, the current family $\h'$ contains at least two $k$-sets that do not contain $x'$. 
Since for any $E\in \h_i$, $E\cap X_i \neq \emptyset$, the maximality of $|\h|$ implies that all $k$-sets containing $X_i$ are in $\h$ for $i=1,2$. Moreover, these sets stay fixed during the shifting process. 
This implies that there are at least $\binom{n-3}{k-2}$ (if $i=1$) or $\binom{n-4}{k-3}$ (if $i=2$) members of $\h'$ that do not contain $x'$ and we are done. 

Now we prove the case $k=3$. The following observation will be helpful.
\begin{fact}\label{fact:33}
Given a 3-uniform family $\F$ on $X$ which is HM at a 3-set $A$, if there are at least three triples of $\F$ containing both $v, v'\in X$, then $\{v, v'\}\subseteq A$.
\end{fact}

First assume that we reach Case (1).
Let $X_1 = \{x, y\}$ and note that for any $E\in \h_1$, $E\cap X_1 \neq \emptyset$.
We apply~$S_{x'y'}$ repeatedly to the family for $x'<y'$, $x', y'\in X \setminus X_1$. 
By Fact~\ref{fact:HM}, to show that we will \emph{not} reach Cases (1) or (2) for any $x'<y'$, $x', y'\in X \setminus X_1$, it suffices to show that in each step, the current family $\h'$ contains at least two triples that do not contain $x'$. 
Since for any $E\in \h_1$, $E\cap X_1 \neq \emptyset$, the maximality of $|\h|$ implies that all the $n-2$ triples that contain $X_1$ are in $\h$ (so in $\h'$).
Moreover, these triples stay fixed during the shifting process. 
We are done because there are $n-3$ such triples in $\h'$ that do not contain $x'$. 
However, it is possible that we reach Case (3) this time.
Assume that we reach Case (3), say, $\h' := S_{x' y'}(\h'')$ is HM at some 3-set $A$ for some $x' < y'$.
We claim that $A=\{x', x, y\}$.
Indeed, since $\{x,y\}=X_1$ is in at least $n-2\ge 6$ edges of $\h'$, we know that $\{x, y\}\subseteq A$ by Fact~\ref{fact:33}.
Moreover, by Fact~\ref{fact:shift}, we have $x'\in A$ and thus $A=\{x', x, y\}$.
Since $\h''$ is not HM at $\{x', x, y\}$, we can pick an element $z'$ such that $\{x, y', z'\}\in \h''$ or $\{y, y', z'\}\in \h''$.
We then set $X_1 = \{x, y, y', z'\}$ and do the shift for all $x'', y'' \in X \setminus X_1$, $x'' < y''$.
The same arguments show that this time we will not reach Case (1) or (2). 
By similar reasons as before, the resulting family can only be HM at $\{x'', x, y\}$.
This is also impossible because the family contains $\{x, y', z'\}$ (or $\{y, y', z'\}$).

Second assume that we reach Case (2).
Let $X_2 = \{x, y, z_1, z_2\}$, where $E_0=\{z_1, z_2, z_3\}$ is defined as in the case $k\ge 4$ and without loss of generality, $z_1\neq y$ and $z_2\neq y$. 
Note that for any $E\in \h_2\setminus\{E_0\}$, $E\cap\{x, y\}\neq \emptyset$. So $E\cap X_2 \neq \emptyset$ for all $E\in \h_2$, and moreover, by the maximality of $|\h|$, we may assume that $\{x, y, z_1\}, \{x, y, z_2\}\in \h_2$.
We apply repeatedly $S_{x'y'}$ to the family for $x'<y'$, $x', y'\in X \setminus X_2$. 
In each step $\h'$, we know that $\{x, y, z_1\}, \{x, y, z_2\}\in \h'$, so we will not reach Case (1) or (2).
Moreover, if $\h'=S_{x'y'}(\h'')$ is HM at a 3-set $A$, then both $\{x, y, z_1\}$ and $\{x, y, z_2\}$ can miss at most one element of $A$. 
By Fact~\ref{fact:shift}, $x'\in A$ and thus $A = \{x', x, y\}$.
Recall that $E_0 = \{z_1, z_2, z_3\}\in \h_2$ and $\{z_1, z_2\}\cap \{x', x, y\}=\emptyset$. This implies that $|E_0'\cap \{x', x, y\}|\le 1$, where $E_0'\in \h'$ represents the set obtained from $E_0$ after a series of shifts. This is a contradiction and thus we will not reach Case (3). 

At last, assume that we reach Case (3).
Let $X_3 = \{x, y, x_1, x_2\}$.
Note that for any $E\in \h_3$, $|E\cap X_3| \ge 2$.
By the maximality of $|\h|$, we may assume that $\{x, y, x_1\}, \{x, y, x_2\}\in \h_3$.
We apply repeatedly $S_{x'y'}$ to the family for $x'<y'$, $x', y'\in X \setminus X_3$. 
In each step $\h'$, we know that $\{x, y, x_1\}, \{x, y, x_2\}\in \h'$, so we will not reach Case (1) or (2).
Moreover, assume that $\h'=S_{x'y'}(\h'')$ is HM at a 3-set $A$.
Since every set in $\h_3$ contains $x_1$ or $x_2$, by the maximality of $|\h|$, we may assume that all sets containing $x_1$ and $x_2$ are in $\h_3$ (so in $\h'$).
This implies that $\{x_1, x_2\}\subseteq A$ by Fact~\ref{fact:33}.
By Fact~\ref{fact:shift}, $x'\in A$ and thus $A = \{x', x_1, x_2\}$.
However, since $\{x, y, x_1\}\in \h'$ and $|\{x, y, x_1\}\cap A|=1$, we get a contradiction. So we will not reach Case (3).

Let $\G$ be the resulting family.
Note that $E\cap X_i \neq \emptyset$ for all $E\in \h_i$ implies that $E\cap X_i \neq \emptyset$ for all $E\in \G$, because the shifts do not affect elements in $X_i$.
\end{proof}

Eventually we obtain a family $\G$ such that
\begin{enumerate}[(i)]
\item $E\cap X_i \neq \emptyset$ for all $E\in \G$ and $i=1,2,3$,
\item $S_{x'y'}(\G) = \G$ for $x'<y'$, $x', y'\in X \setminus X_i$.
\end{enumerate}

Let $X_0 = \emptyset$. For $k\ge 4$ and $i=0,1,2$, let $Y_i$ be the set of the first $2k - |X_i|$ elements of $X \setminus X_i$. For $k=3$ and $i=0,1,2,3$, let $Y_i$ be the set of the first $7 - |X_i|$ elements of $X \setminus X_i$.
By Proposition~\ref{prop:adjust}, in all cases, we have $|Y_i | \ge 2k -3$.
If we end up with Case ($j$) in Proposition~\ref{prop:adjust} ($j\in\{1,2,3\}$), let $Y = X_j\cup Y_j$, and thus $|Y| = 2k$ or $2k+1$.

\begin{lemma}\label{lem:2kint}
For all $E, E'\in \G$, $E\cap E'\cap Y\neq\emptyset$ holds.
\end{lemma}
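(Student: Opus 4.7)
The plan is a proof by contradiction via iterated shifting. Suppose toward a contradiction that $E, E' \in \G$ satisfy $E \cap E' \cap Y = \emptyset$; set $t := |E \cap E'| \ge 1$ and write $E \cap E' = \{z_1, \ldots, z_t\}$. Recall that $Y = X_j \cup Y_j$, where $Y_j$ is the initial segment of length $|Y|-|X_j|$ of the linear order on $X \setminus X_j$. Consequently, every element of $X \setminus Y$ lies in $X \setminus X_j$ and is strictly above every element of $Y_j$; in particular each $z_r$ enjoys this property.

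The idea is to peel off the common intersection one element at a time. Concretely, I would build $E = E^{(0)}, E^{(1)}, \ldots, E^{(t)} \in \G$ satisfying $E^{(l)} \cap E' = \{z_1, \ldots, z_{t-l}\}$: at step $l$, pick some $x_l \in Y_j \setminus (E^{(l-1)} \cup E')$ and set $E^{(l)} := (E^{(l-1)} \setminus \{z_{t-l+1}\}) \cup \{x_l\}$. Because $x_l < z_{t-l+1}$ and both lie in $X \setminus X_j$, stability of $\G$ under $S_{x_l z_{t-l+1}}$ (property (ii)) forces $E^{(l)} \in \G$; moreover the $x_l$ are automatically distinct since $x_l \notin E^{(l-1)} \supseteq \{x_1, \ldots, x_{l-1}\}$. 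After $t$ such steps, $E^{(t)} \cap E' = \emptyset$, contradicting that $\G$ is intersecting.

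The main (indeed only) obstacle is confirming that $Y_j \setminus (E^{(l-1)} \cup E')$ is nonempty throughout the iteration. This reduces to a short counting check built on three observations: (a) in Cases 1--3, property (i) forces both $E \cap X_j$ and $E' \cap X_j$ to be nonempty, and since $E \cap E' \cap X_j \subseteq E \cap E' \cap Y = \emptyset$ these contributions are disjoint, so $|(E \cup E') \cap X_j| \ge 2$; (b) the $t - l + 1$ remaining elements of $E^{(l-1)} \cap E'$ all sit in $X \setminus Y$ and contribute nothing to $Y_j$; (c) the previously inserted $x_1, \ldots, x_{l-1}$ already occupy $l-1$ slots inside $Y_j$. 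Combining (a)--(c) with $|Y_j| = 2k - |X_j|$ (respectively $|Y_j| = 7 - |X_j|$ when $k = 3$), a case check over $|X_j| \in \{0, 2, 3, 4\}$ verifies $|Y_j \setminus (E^{(l-1)} \cup E')| \ge 1$ at every step. The possible edge case $E = E'$ is dispensed with quickly: in Cases 1--3, property (i) already gives $E \cap Y \supseteq E \cap X_j \ne \emptyset$, so the hypothesis $E \cap E' \cap Y = \emptyset$ fails; and in Case 0 the same iteration still applies with $t = k$, using $|Y_0| \ge 2k$.
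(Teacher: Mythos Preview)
Your proof is correct and uses essentially the same idea as the paper: assume $E\cap E'\cap Y=\emptyset$, then exploit stability of $\G$ under shifts $S_{x'y'}$ with $x',y'\in X\setminus X_j$ to replace an element of $E\cap E'\subseteq X\setminus Y$ by an element of $Y_j\setminus(E\cup E')$, decreasing $|E\cap E'|$. The paper packages this as a minimal-counterexample argument (choose $E,E'$ with $|E\cap E'|$ minimal and do a \emph{single} shift on $E'$ to obtain $E''$ with $|E\cap E''|<|E\cap E'|$ and $E\cap E''\cap Y=\emptyset$, contradicting minimality), whereas you iterate explicitly $t$ times until the intersection is empty. The counting is marginally cleaner in the paper's version since one never has to track the previously inserted $x_1,\dots,x_{l-1}$, but your more careful bookkeeping (observations (a)--(c)) handles this correctly, and both arguments rest on the same inequality $|Y_j|\ge 2k-3$.
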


\begin{proof}
First let $i=1,2,3$.
Suppose for a contradiction that $E\cap E'\cap Y = \emptyset$ and $E, E'\in \G$ such that $|E\cap E'|$ is minimal. By (i) and $|E\cap E' \cap (X \setminus Y)|\ge 1$, we have
\[
|(E\cup E')\cap Y_i| \le |E\cap Y_i| + |E'\cap Y_i| \le 2k - 4.
\]
Since $|Y_i| \ge 2k -3$, there is an element $a\in Y_i \setminus (E\cup E')$. Pick any $b\in E\cap E'\cap (X \setminus Y)$ and note that $a<b$. By (ii), we know $E'' := (E' \setminus \{b\})\cup \{a\} \in \G$. This is a contradiction because $E\cap E''\cap Y=\emptyset$ and $|E\cap E''|< |E\cap E'|$.
The case $Y = X_0\cup Y_0$ is similar.
\end{proof}

For $i\in [k]$, let $\A_i = \{E\cap Y: E\in \G, |E\cap Y| = i\}$. By the definition of $\A_i$ and Lemma~\ref{lem:2kint}, we have the following fact.

\begin{fact}\label{fact:Ai}
The family $(\bigcup_{1\le i\le k}\A_i)\cup \G$ is intersecting.
\end{fact}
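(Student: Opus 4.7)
The plan is to verify directly from the definitions that every pair of members of $(\bigcup_{1\le i\le k}\A_i)\cup \G$ has nonempty intersection; the content of the fact is really just a re-packaging of Lemma~\ref{lem:2kint}. I would organize the verification into three cases according to which part of the union each member comes from.

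First, two edges $E, E'\in \G$ intersect since $\G$ itself is an intersecting family by hypothesis. Second, for two members of $\bigcup_{1\le i\le k}\A_i$, say $E\cap Y$ and $E'\cap Y$ with $E, E'\in \G$, Lemma~\ref{lem:2kint} yields $E\cap E'\cap Y\neq\emptyset$, and any element of this set lies both in $E\cap Y$ and in $E'\cap Y$. Third, for a mixed pair consisting of an edge $E\in \G$ and a trace $E'\cap Y\in \A_i$ (where $E'\in \G$), the same lemma provides an element of $E\cap E'\cap Y$, which in particular lies in $E$ and in $E'\cap Y$.

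There is no genuine obstacle here: the only observation needed beyond Lemma~\ref{lem:2kint} is that every element of $\bigcup_{1\le i\le k}\A_i$ is by definition of the form $E\cap Y$ for some $E\in \G$, so a common element of $E\cap E'\cap Y$ automatically witnesses the intersection of the corresponding traces (and of any original edge with any trace). Assembling the three cases gives the fact.
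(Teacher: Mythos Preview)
Your proof is correct and is essentially the same approach the paper takes: the paper simply remarks that the fact follows from the definition of the~$\A_i$ and Lemma~\ref{lem:2kint}, and your three-case verification is exactly the unpacking of that remark.
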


The following lemma is devoted to our final counting.

\begin{lemma}\label{lem:calc}
For $k=3$, we have $|\A_1|=0$, $|\A_2| \le 2$ and $|\A_3| \le 12$.
For $k\ge 4$, we have
\[
|\A_i| \le \binom{2k-1}{i-1} - \binom{k-1}{i-1} - \binom{k-2}{i-2} \text{ for } 1\le i\le k-1
\]
and
\[
|\A_k| \le \frac12\binom{2k}{k} = \binom{2k-1}{k-1} - \binom{k-1}{k-1} - \binom{k-2}{k-2} +2.
\]
\end{lemma}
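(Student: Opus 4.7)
The plan is to exploit two structural properties of each $\A_i$ inherited from $\G$. First, $\A_i$ is intersecting on $Y$: if $A = E \cap Y$ and $A' = E' \cap Y$ with $E, E' \in \G$, then $E \cap E' \cap Y \neq \emptyset$ by Lemma~\ref{lem:2kint}, so $A \cap A' \neq \emptyset$. Second, $\A_i$ is shifted with respect to $Y_j$: if $A \in \A_i$ with $y \in A \cap Y_j$, $x \in Y_j \setminus A$ and $x < y$, then applying $S_{xy}$ to any $E \in \G$ with $E \cap Y = A$ produces $(E \setminus \{y\}) \cup \{x\} \in \G$ (using $S_{xy}(\G) = \G$), whose trace on $Y$ is $(A \setminus \{y\}) \cup \{x\}$, which therefore lies in $\A_i$.

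The bound on $|\A_k|$ when $k \ge 4$ is immediate: since $|Y| = 2k$ and no two complementary $k$-subsets of $Y$ can both lie in the intersecting family $\A_k$, we get $|\A_k| \le \binom{2k}{k}/2 = \binom{2k-1}{k-1}$, matching the claimed formula.

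For the $k = 3$ cases, $|\A_1| = 0$ follows because a singleton trace $\{v\} \in \A_1$ would force every $E' \in \G$ to contain $v$ (by Lemma~\ref{lem:2kint}), giving $\G \subseteq_S \F_0$, a contradiction. For $|\A_2| \le 2$: the intersecting $2$-uniform family $\A_2$ is either contained in a star or contains a triangle $\{\{y_1, y_2\}, \{y_2, y_3\}, \{y_1, y_3\}\}$. In the triangle case, every $E \in \G$ satisfies $|E \cap \{y_1, y_2, y_3\}| \ge 2$, forcing $\G \subseteq_S \G_2$, excluded for $k = 3$. In the star case at $v$ with $|\A_2| \ge 3$ and pairs $\{v, z_1\}, \{v, z_2\}, \{v, z_3\}$, Lemma~\ref{lem:2kint} combined with $|E| = 3$ forces any $E \in \G$ with $v \notin E$ to equal $\{z_1, z_2, z_3\}$, which makes $\G$ a subfamily of $\F_0$ (if $\{z_1, z_2, z_3\} \notin \G$) or of $\F_1$ (otherwise), again contradicting the hypothesis. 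For $|\A_3| \le 12$: $\A_3$ is an intersecting $3$-uniform family on $[7]$, so EKR gives $|\A_3| \le 15$ and Hilton--Milner gives $|\A_3| \le 13$ if non-trivially intersecting; I would rule out the top three values via the constraints on $\G$ (through Lemma~\ref{lem:2kint}) and the $|\A_2| \le 2$ bound just established.

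For the general bound with $1 \le i \le k - 1$ and $k \ge 4$, the EKR star bound $|\A_i| \le \binom{2k-1}{i-1}$ holds for the intersecting $i$-uniform family $\A_i$ on $[2k]$. The corrections $\binom{k-1}{i-1}$ and $\binom{k-2}{i-2}$ arise from the non-EKR and non-HM hypotheses on $\G$: if $\A_i$ contained too many star-like subsets at the smallest element of $Y$, then mirroring the $|\A_2|$ argument via Lemma~\ref{lem:2kint} and the size constraint $|E| = k$ would force every $E \in \G$ avoiding that vertex to have very restricted form, ultimately yielding $\G \subseteq_S \F_0$ or $\G \subseteq_S \F_1$. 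The two correction terms correspond to two separate families of forbidden configurations in $\A_i$. The main obstacle is matching the counts of these forbidden configurations exactly to $\binom{k-1}{i-1}$ and $\binom{k-2}{i-2}$; this requires careful tracking of how the non-EKR/non-HM constraints on $\G$ propagate through Lemma~\ref{lem:2kint}, combined with the shifted structure on $Y_j$ and appropriate handling of the boundary elements in $X_j$ that are not subject to shifting.
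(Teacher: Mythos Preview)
Your treatment of $|\A_1|=0$, of $|\A_2|\le 2$ for $k=3$, and of $|\A_k|\le\binom{2k-1}{k-1}$ for $k\ge4$ is fine and matches the paper. The $|\A_3|\le12$ case is sketched only to the point of ``I would rule out the top three values''; the paper actually does this by showing that if $|\A_3|\ge13$ then $\A_3$ is EKR or HM (by Theorem~\ref{thm:HM}), and in either case one produces an edge $E\in\G\setminus\A_3$ whose trace in $\A_2$ forces $d_{\A_3}(x)\le9$ (or a disjoint pair), a contradiction. You should fill that in.

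The real gap is the general bound for $2\le i\le k-1$ when $k\ge4$. Your paragraph there is not a proof: it names the two correction terms and gestures at ``forbidden configurations'' and ``shifted structure'', but gives no mechanism for arriving at exactly $\binom{k-1}{i-1}+\binom{k-2}{i-2}$. The paper's argument is quite different from what you outline, and the key idea you are missing is \emph{induction on $i$}. One assumes the lemma holds for all smaller values of the uniformity parameter; then if $|\A_i|$ exceeds the claimed bound, the inductive hypothesis applied to the $i$-uniform intersecting family $\A_i$ on the $2k$-set $Y$ forces $\A_i$ itself to be EKR or HM at some $x\in Y$ (or, when $i=3$, HM at a triple, which is dispatched by a direct estimate $|\A_3|\le 6k-8$). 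Now Fact~\ref{fact:HM} supplies two edges $E,E'\in\G$ with $x\notin E,E'$; one then bounds $|\A_i|$ by inclusion--exclusion over the sets containing $x$ and meeting both $E\cap Y$ and $E'\cap Y$, and optimises over the parameters $t=|E\cap E'\cap Y|$, $t_1=|(E\cap Y)\setminus E'|$, $t_2=|(E'\cap Y)\setminus E|$. The correction terms $\binom{k-1}{i-1}$ and $\binom{k-2}{i-2}$ fall out of this optimisation, not from any direct count of forbidden star configurations.

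Your observation that $\A_i$ is shifted on $Y_j$ is correct but is not used anywhere in the paper's proof; if you intend to build an alternative argument on it, you would need to actually carry it out, and it is not clear that shifting alone pins down the two subtracted binomials without something like the inductive step above.
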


\begin{proof}
If $|\A_1| > 0$, then there is a set $E\in \G$, such that $E\cap Y = \{x\}$ for some $x\in Y$. By Fact~\ref{fact:HM}, there is a set $E'\in \G$ such that $x\notin E'$. Thus $E\cap E' \cap Y = \emptyset$, contradicting Lemma~\ref{lem:2kint}. So $|\A_1| =0$ as desired.

First assume $k=3$. 
Assume to the contrary that~$|\A_2|\ge 3$. Since $\A_2$ is 2-uniform and intersecting, $\A_2$ is a star or a triangle.
If $\A_2$ is a star at~$x$ of size at least 3, then there is at most one triple that avoids $x$ and meets each edge of the star, contradicting Fact~\ref{fact:HM}.
Otherwise $\A_2$ is a triangle at $\{x, y, z\}$. In this case any member of $\A_2\cup\A_3$ (and thus any member of $\G$) must contain at least two elements of $\{x, y, z\}$, which means that $\G\subseteq_S \G_2$, a contradiction.
Thus we get $|\A_2|\le 2$.
Note that $\A_3$ is the induced subfamily of $\G$ on $Y$, which is a 3-uniform family on 7 elements ($|Y|=2k+1=7$).
Assume to the contrary that $|\A_3|\ge 13$.
By Theorem~\ref{thm:HM}, we know that $\A_3$ is either EKR or HM.
First assume that $\A_3$ is EKR or HM at some $x\in Y$, i.e., $d_{\A_3}(x)\ge |\A_3|-1$.
Since $\G$ is neither EKR nor HM, there is an edge $E\in \G\setminus \A_3$ such that $x\notin E$.
Moreover, because $\A_1=\emptyset$, we know that $x\notin E\cap Y\in \A_2$.
Since $\A_2\cup\A_3$ is intersecting (Fact~\ref{fact:Ai}), every edge in $\A_3$ must intersect $E\cap Y$.
This implies that $d_{\A_3}(x)\le 9$, and thus $|\A_3|\le d_{\A_3}(x)+1\le 10$, a contradiction.
Second assume that $\A_3$ is HM at some $\{x, y, z\}\in Y$, so $|\A_3|=13$.
Similarly, since $\G$ is not HM, there is an edge $E\in \G\setminus \A_3$ such that $|E\cap \{x, y, z\}|\le 1$.
Without loss of generality, assume that $y, z\notin E$.
Let $z'\in Y\setminus \{x, y, z\}$ and note that $\{y, z, z'\}\in \G$.
Then $E\cap \{y, z, z'\}=\emptyset$, contradicting that $\G$ is intersecting.
So $|\A_3|\le 12$ holds.

Now assume $k\ge 4$. Fix $2\le i\le k-1$. Observe that
\[
\binom{2k-1-i}{i-1} - \binom{k-1}{i-1} = \binom{2k-2-i}{i-2} + \cdots + \binom{k-1}{i-2} \ge 2.
\]
Indeed, since there are $k-i$ binomial coefficients in the sum and each of them is at least 1, the inequality holds if $i\le k-2$. Otherwise $i=k-1$, then $\binom{2k-1-i}{i-1} - \binom{k-1}{i-1} = \binom{k-1}{2}\ge 2$ as $k\ge 4$. 
Assume that, to the contrary of the inequality in the lemma, we have
\begin{align*}
|\A_i|& > \binom{2k-1}{i-1} - \binom{k-1}{i-1} - \binom{k-2}{i-2}  \\
&\ge \binom{2k-1}{i-1} - \binom{2k-1-i}{i-1} - \binom{2k-2-i}{i-2}+2.
\end{align*}
Since $\A_i$ is an intersecting $i$-uniform family on $2k$ vertices,
we may assume, by induction on~$i$, 
that~$\A_i$ is EKR or HM at some $x\in Y$, or $\A_i$ is HM at some $\{x,y,z\}\subseteq Y$ for $i=3$.

We first assume that $\A_i$ is EKR or HM at some $x$. So $\A_i$ contains at most one $i$-set $A$ which does not contain $x$.
Pick $E, E'\in \G$ such that $x\notin E$, $x\notin E'$ and $|E\cap Y|$ is minimal.
Let $|E\cap E'\cap Y| = t$, $|(E\cap Y)\setminus E'|=t_1$ and $|(E'\cap Y)\setminus E|=t_2$. 
Clearly, $1\le t\le k-1$ and $t+t_1\le t+t_2\le k$.
Since $\A_i\cup \{E, E'\}$ is intersecting, we have
\begin{equation}\label{eq:Ai}
|\A_i| \le \binom{2k-1}{i-1} - \binom{2k-1-t-t_1}{i-1} - \binom{2k-1-t-t_2}{i-1} + \binom{2k-1-t-t_1-t_2}{i-1}+c,
\end{equation}
where $c=1$ if $\A_i$ contains an $i$-set that does not contain $x$ and $c=0$ otherwise.
Note that
\begin{align*}
&- \binom{2k-1-t-t_1}{i-1} - \binom{2k-1-t-t_2}{i-1} + \binom{2k-1-t-t_1-t_2}{i-1} \\
=&- \binom{2k-1-t-t_1}{i-1} - \binom{2k-2-t-t_2}{i-2} - \cdots - \binom{2k-1-t-t_1-t_2}{i-2}.
\end{align*}
So in \eqref{eq:Ai}, we can substitute $t_1$ and $t_2$ by $k-t$ (this will not decrease the bound), that is,
\[
|\A_i| \le \binom{2k-1}{i-1} - \binom{k-1}{i-1} - \binom{k-1}{i-1} + \binom{t-1}{i-1}+c.
\]
Similarly, we can substitute $t$ by $k-1$, that is, $|\A_i| \le \binom{2k-1}{i-1} - 2\binom{k-1}{i-1} + \binom{k-2}{i-1}+c$.
Moreover, the inequality is tight only if $t+t_1=k$, but $c=1$ holds only if $t+t_1\le i\le k-1$.
Since we cannot have both holding simultaneously, we have the desired bound
\begin{align*}
|\A_i| \le \binom{2k-1}{i-1} - 2 \binom{k-1}{i-1} + \binom{k-2}{i-1} = \binom{2k-1}{i-1} - \binom{k-1}{i-1} - \binom{k-2}{i-2}.
\end{align*}

Next assume that $i=3$ and $\A_i$ is HM at some $\{x,y,z\}\subseteq Y$. In this case it is easy to see that $|\A_i|\le 3(2k-3)+1 = 6k-8$. Note that when $k\ge 4$, we have $6k-8 \le \binom{2k-1}{i-1} - \binom{k-1}{i-1} - \binom{k-2}{i-2}$. 

At last, by Theorem~\ref{thm:EKR}, we have $|\A_k| \le \binom{2k-1}{k-1}= \binom{2k-1}{k-1} - \binom{k-1}{k-1} - \binom{k-2}{k-2} +2$.
\end{proof}

Now we proceed to the final estimation.
Note that for a fixed $A\in \A_i$, there are at most $\binom{n-|Y|}{k-i}$ $k$-element sets $E$ with $E\cap Y=A$. For $k=3$, we get
\begin{equation}\label{eq:eq0}
|\G| \le \sum_{i=1}^k |\A_i| \binom{n-2k-1}{k-i} \le 2\binom{n-2k-1}{k-2} + 12 = 2n-2,
\end{equation}
as desired. For $k\ge 4$, we have
\begin{align}
|\G| &\le \sum_{i=1}^k |\A_i| \binom{n-2k}{k-i} \le  2 + \sum_{i=1}^k \left(\binom{2k-1}{i-1} - \binom{k-1}{i-1} - \binom{k-2}{i-2} \right)\binom{n-2k}{k-i} \nonumber \\
&= \binom{n-1}{k-1} - \binom{n-k-1}{k-1} - \binom{n-k-2}{k-2}+2 \label{eq:eq},
\end{align}
proving the inequality part of the theorem.

\section{The uniqueness in the theorem}

\subsection{The stability of the shifts}
The following lemma shows the `stability' of the shifts.

\begin{lemma}\label{lem:shift}
Let $\h$ be a $k$-uniform intersecting family. If $k\ge 3$ and $S_{xy}(\h) = \F$ for some $\F\in \{\J_2, \G_{k-1}, \G_2\}$, then $\h$ is isomorphic to $\F$. 
\end{lemma}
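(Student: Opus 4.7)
The plan is to show that $\h$ equals either $\F$ or its image $\sigma(\F)$ under the transposition $\sigma = (x\, y)$; since $\sigma(\F) \cong \F$, this yields $\h \cong \F$.

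I will first decompose $\F$ (and $\h$) into four parts based on their intersection with $\{x, y\}$: type $A$ (containing neither), type $B$ (containing both), type $C$ (containing only $x$), and type $D$ (containing only $y$). Because $S_{xy}$ fixes sets of types $A$, $B$, $C$ and can only move type $D$ sets to type $C$, and because $|\h| = |S_{xy}(\h)|$, it follows that $\h_A = \F_A$, $\h_B = \F_B$, $\F_D \subseteq \h_D$, and for each $F \in \F_C$ exactly one of $F$ or $\sigma(F)$ belongs to $\h$. A key observation is that every $F \in \F_D$ must be preserved by the shift, which forces $\sigma(F) \in \h_C \subseteq \F_C$; hence the structural constraint $\sigma(\F_D) \subseteq \F_C$, equivalent to $S_{xy}(\F) = \F$. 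Moreover, if $F \in \F_C \cap \sigma(\F_D)$, then $F$ must lie in $\h$ (otherwise $\sigma(F) \in \F_D \subseteq \h$ would be shifted, a contradiction). Thus the only freedom is a subset $T^c \subseteq \F_C \setminus \sigma(\F_D)$ of sets swapped for their $\sigma$-images, giving the canonical form $\h = \F_A \cup \F_B \cup \F_D \cup (\F_C \setminus T^c) \cup \sigma(T^c)$.

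Next, I perform a case analysis on the shape of $\F$ and the positions of $\{x, y\}$ relative to the canonical data defining $\F$ in Definition~\ref{ex:1} (the sets $E$, $J$, and the element $x_0$). In each case I compute $\F_C$, $\F_D$, and $\sigma(\F_D)$ explicitly, and use the intersecting property of $\h$ to argue that $T^c$ must equal either $\emptyset$ or $\F_C \setminus \sigma(\F_D)$. The forcing mechanism is that if $T^c$ is neither extreme, then I exhibit $F_1 \in T^c$ and $F_2 \in \F_C \setminus \sigma(\F_D)$ with $F_2 \notin T^c$ such that $F_1 \cap F_2 = \{x\}$; then $\sigma(F_1) \cap F_2 = \emptyset$, contradicting that $\h$ is intersecting. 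The existence of such a disjoint ``witness'' pair relies on the condition $n > 2k$, which leaves enough room in $X$ to independently vary the remaining coordinates of $F_1$ and $F_2$ outside the special structure of $\F$. When $T^c = \emptyset$ one gets $\h = \F$; when $T^c = \F_C \setminus \sigma(\F_D)$, a direct check shows $\h = \sigma(\F)$; in both cases $\h \cong \F$.

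The main obstacle will be the case analysis itself, and most notably the case $\F = \J_2$: here the defining data comprises the $(k-1)$-set $E$, the $3$-set $J$, and the distinguished element $x_0 \in J$, so there are several subcases for the placement of $\{x, y\}$ (both inside $E$, one in $E$ and one in $J$, one equal to $x_0$, both outside $E \cup J$, etc.). For each subcase one must describe $\F_C \setminus \sigma(\F_D)$ concretely and construct the required disjoint witnesses; although $n > 2k$ provides enough freedom in every configuration, the bookkeeping is the main technical burden.
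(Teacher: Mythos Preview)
Your proposal is correct and follows essentially the same strategy as the paper. Your set $\F_C\setminus\sigma(\F_D)$ is precisely the paper's $\B_x$, your subset $T^c$ corresponds to the paper's $\B_1$, and your witness argument (finding $F_1\in T^c$, $F_2\notin T^c$ with $F_1\cap F_2=\{x\}$) is exactly the statement that the $(k-1)$-uniform family $\B=\{E\setminus\{x\}:E\in\B_x\}$ is \emph{non-separable}, i.e., admits no nontrivial cross-intersecting bipartition.

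The only substantive difference is organizational: the paper isolates two reusable propositions (one handling families of the form $\{B:0<|B\cap A|<|A|\}$ on a ground set of size $\ge 2r+1$, the other handling a specific two-center bipartite structure), proves each is non-separable once, and then in the case analysis for $\G_{k-1}$, $\G_2$, $\J_2$ simply checks that $\B$ matches one of these templates. Your plan is to redo the witness-finding directly inside each subcase. Both routes work; the paper's packaging avoids repetition across the cases (particularly the several $\J_2$ subcases you anticipate), whereas your direct approach keeps the argument self-contained but duplicates effort. If you carry out your plan you will in effect be reproving those two propositions several times over.
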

We first prove the following propositions.
For two families $\A_1$ and~$\A_2$ on the same set, we say $(\A_1, \A_2)$ is \emph{cross-intersecting} if for any $A_1\in \A_1$ and $A_2\in \A_2$, $A_1\cap A_2\neq \emptyset$.
A family $\A$ is called \emph{non-separable} if for any partition $\A_1\cup \A_2$ of $\A$ such that $(\A_1, \A_2)$ is cross-intersecting, we have that $\A_1=\emptyset$ or $\A_2=\emptyset$.

\begin{proposition}\label{prop:1}
Fix $s\ge 2$ and~$a$ and~$b\ge 1$ such that $a+ b\le s$. Let $C$ be a set of size at least $s+1$ and let $z_1, z_2\notin C$. Let $\C$ be the family on $C\cup \{z_1, z_2\}$ such that
\[
\C = \{\{z_1\}\cup D: D\subseteq C, |D|=a\} \cup \{\{z_2\}\cup E: E\subseteq C, |E|=b\}.
\]
Then $\C$ is non-separable.
\end{proposition}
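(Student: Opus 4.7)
The plan is to reduce non-separability to a graph-connectivity statement. Define the \emph{disjointness graph} $H$ on vertex set $\C$, placing an edge between $A,B\in\C$ whenever $A\cap B=\emptyset$. Then $\C$ is non-separable if and only if $H$ is connected: given a separating cross-intersecting bipartition $\C=\C_1\cup \C_2$ with both parts nonempty and disjoint, no edge of $H$ can cross from $\C_1$ to $\C_2$, so $H$ is disconnected; conversely, a connected component of $H$ together with its complement yields such a separation.

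To analyze $H$, observe that any two sets in $\C^{(1)} := \{\{z_1\}\cup D : D\in \binom{C}{a}\}$ share $z_1$, and any two sets in $\C^{(2)} := \{\{z_2\}\cup E : E\in \binom{C}{b}\}$ share $z_2$. Hence $H$ is bipartite with parts $\C^{(1)}$ and $\C^{(2)}$, and $\{z_1\}\cup D$ is adjacent to $\{z_2\}\cup E$ precisely when $D\cap E=\emptyset$. Both parts are nonempty since $|C|\ge s+1 \ge a+b+1$ forces $|C|\ge \max\{a,b\}$.

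The key step is: for any two $D,D'\in \binom{C}{a}$ with $|D\cap D'|=a-1$, the vertices $\{z_1\}\cup D$ and $\{z_1\}\cup D'$ share a common neighbor in $H$. Indeed, $|D\cup D'|=a+1$, and the hypothesis $|C|\ge s+1\ge a+b+1$ gives $|C\setminus(D\cup D')|\ge b$, so we may choose any $E\in \binom{C\setminus(D\cup D')}{b}$, yielding the length-two path $\{z_1\}\cup D$, $\{z_2\}\cup E$, $\{z_1\}\cup D'$ in $H$. Because the Johnson graph on $\binom{C}{a}$ (whose edges are pairs of $a$-subsets differing in exactly one element) is connected whenever $|C|>a$, concatenating such two-step paths places all of $\C^{(1)}$ in a single component of $H$. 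Finally, every $\{z_2\}\cup E\in \C^{(2)}$ has a neighbor in $\C^{(1)}$, since any $D\in \binom{C\setminus E}{a}$ works (such $D$ exists because $|C|-b\ge a+1$), so all of $\C^{(2)}$ lies in that same component, and $H$ is connected.

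The only real obstacle is spotting the reformulation in terms of the bipartite disjointness graph; after that, the hypothesis $a+b\le s\le |C|-1$ is exactly the slack required both for the two-step path construction and for connectedness of the underlying Johnson graph, and the rest is bookkeeping.
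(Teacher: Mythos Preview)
Your proof is correct and follows essentially the same route as the paper's: both arguments show that adjacent $a$-sets $D,D'$ (with $|D\cap D'|=a-1$) admit a common disjoint $b$-set $E$, use the connectivity of the Johnson graph to put all of $\C^{(1)}$ in one part, and then absorb $\C^{(2)}$ via a single disjoint witness. Your explicit reformulation in terms of connectivity of the disjointness graph is a clean way to package the argument, but the substance is the same.
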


\begin{proof}
Consider a partition $\C=\C'\cup \C''$.
Since $|C|\ge s+1$, for any two $a$-sets $D_1, D_2$ in $C$ such that $|D_1\cap D_2| =a-1$, there is a $b$-set $E\subseteq C\setminus (D_1\cup D_2)$. 
Thus both $\{z_1\}\cup D_1$ and $\{z_1\}\cup D_2$ are disjoint from $\{z_2\}\cup E$ and thus they must belong to the same part. 
Observe that for any two $a$-sets $D, D'$ in $C$, we can pick a sequence of $a$-sets $D_1,\dots, D_t$ such that $|D_i\cap D_{i+1}|=a-1$ for $i\in [t-1]$ and $|D\cap D_1| = |D'\cap D_t|=a-1$. 
So all sets of form $\{z_1\}\cup D$ are in the same part. 
Clearly, for any $b$-set $E\subseteq C$, there exists $D\subseteq C$ such that $\{z_2\} \cup E$ and $\{z_1\}\cup D$ are disjoint and thus they must be in the same part. So all sets in $\C$ are in the same part and we are done.
\end{proof}

\begin{proposition}\label{prop:crossint}
Fix $r\ge 2$. Let $Z$ be a set of size $m\ge 2r+1$ and let $A\subseteq Z$ such that $|A|\in \{r-1,r\}$. Let $\B$ be an $r$-uniform family on $Z$ such that
$\B = \{B\subseteq Z: 0<|B\cap A| < |A| \}$. 
Then $\B$ is non-separable.
\end{proposition}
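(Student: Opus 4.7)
The plan is to prove non-separability of $\B$ by showing that the \emph{disjointness graph} $G$ on vertex set $\B$, with an edge $\{B,B'\}$ whenever $B\cap B'=\emptyset$, is connected. This is equivalent: in any cross-intersecting bipartition $\B=\B_1\sqcup\B_2$, no edge of $G$ crosses the partition, so each part is a union of connected components; connectivity of $G$ therefore forces one part to be empty.

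Fix a canonical reference element $B_0:=\{x_0\}\cup E_0\in\B$, where $x_0\in A$ and $E_0\in\binom{Z\setminus A}{r-1}$; this lies in $\B$ since $|B_0\cap A|=1\in\{1,\ldots,|A|-1\}$ (using $|A|\geq r-1\geq 2$, hence $|A|-1\geq 1$). I will show by induction on $\phi(B):=|B\cap B_0|$ that every $B\in\B$ lies in the same connected component of $G$ as $B_0$. The base case $\phi(B)=0$ gives $B$ adjacent to $B_0$ directly. For the inductive step with $\phi(B)\geq 1$, write $B=D\sqcup E$ with $D=B\cap A$ and $E=B\setminus A$, and consider candidate neighbors of the form $B'=D'\sqcup E'$ with $D'\subseteq A\setminus D$, $E'\subseteq(Z\setminus A)\setminus E$, $|D'|+|E'|=r$ and $1\leq|D'|\leq|A|-1$. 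A routine arithmetic check using $m\geq 2r+1$ in both cases $|A|\in\{r,r-1\}$ shows such $B'\in\B$ exist, and further that the sets $A\setminus D$ and $(Z\setminus A)\setminus E$ jointly contain all $r-\phi(B)\geq 1$ elements of $B_0\setminus B$, giving enough flexibility to choose $B'$ with strictly smaller $\phi(B')$ in the generic case.

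In the tight configurations where $B\cup B_0$ saturates either $A$ or $Z\setminus A$ and no single-step reduction is possible, I will take a two-step detour: first pick any disjoint neighbor $B^{(1)}$ of $B$ in $\B$ (which exists by the same lemma), then find a disjoint neighbor $B^{(2)}$ of $B^{(1)}$ with $\phi(B^{(2)})<\phi(B)$ and apply induction starting from $B^{(2)}$. The point is that $B^{(1)}$ can be chosen with structure roughly complementary to $B$ on both the $A$-side and the $(Z\setminus A)$-side, reopening the room needed to place $B^{(2)}$. The main obstacle is verifying feasibility of this detour in the most constrained case, namely $|A|=r-1$ with $\phi(B)$ small, where the bound $m\geq 2r+1$ is tight; there the argument requires a short case split on $|D|$ and on $|D\cap B_0|$, but in each subcase a direct construction of $B^{(1)}$ and $B^{(2)}$ goes through.
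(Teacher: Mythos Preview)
Your reformulation in terms of the disjointness graph is correct and is a natural way to attack the problem, but the inductive scheme you propose does not go through as stated. The specific claim that the two-step detour always succeeds is false.

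Take $r=3$, $|A|=2$, $m=7$, say $A=\{a_1,a_2\}$ and $Z\setminus A=\{c_1,\dots,c_5\}$, and set $B_0=\{a_1,c_1,c_2\}$. Then every $B\in\B$ satisfies $|B\cap A|=1$, and two disjoint members of $\B$ necessarily lie on opposite sides of the bipartition by $A$-element. Now let $B=\{a_1,c_3,c_4\}$, so $\phi(B)=1$. Any neighbour $B'$ of $B$ contains $a_2$ and has its two $c$-coordinates in $\{c_1,c_2,c_5\}$, hence $\phi(B')\ge 1$; the one-step reduction fails. For the two-step detour, any $B^{(1)}$ contains $a_2$, so any $B^{(2)}$ disjoint from $B^{(1)}$ must contain $a_1$, forcing $\phi(B^{(2)})\ge 1=\phi(B)$. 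Thus no choice of $B^{(1)},B^{(2)}$ achieves $\phi(B^{(2)})<\phi(B)$, contradicting your assertion that ``in each subcase a direct construction of $B^{(1)}$ and $B^{(2)}$ goes through.'' (The disjointness graph \emph{is} connected here, but one needs a path of length at least three from $B$ to reach a vertex with $\phi=0$; your potential function $\phi$ with a bounded-length detour cannot capture this.) There is also a small slip earlier: you write $|A|\ge r-1\ge 2$, but $r\ge 2$ only gives $r-1\ge 1$; when $r=2$ and $|A|=1$ the family $\B$ is empty and your $B_0$ does not exist, so that case needs to be disposed of separately.

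For comparison, the paper avoids these parity obstructions by \emph{not} working with a single base point. It stratifies $\B$ into layers $\B_i=\{B\in\B:|B\cap A|=i\}$, first proves (via an auxiliary lemma on families of the form $\{\{z_1\}\cup D\}\cup\{\{z_2\}\cup E\}$) that $\B_1$ is non-separable, and then shows that each remaining layer is forced into the same part as $\B_1$ by finding, for any $B\in\B_j$, a disjoint witness in a layer already controlled. This layer-by-layer propagation sidesteps the bipartite trap that breaks your $\phi$-induction.
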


\begin{proof}
First we assume $r=2$. If $|A|=1$, then $\B=\emptyset$ and we are done. Otherwise $|A|=2$. Note that in this case~$\B$ is isomorphic to the complete bipartite graph $K_{2, m-2}$, where $m-2\ge 3$. Then the proposition follows immediately from Proposition~\ref{prop:1} by setting $a=b=1$, $s=2$ and $C=Z\setminus A$.

Now assume $r\ge 3$.
Consider a partition $\B=\B'\cup \B''$.
Let $\B_i = \{E\in \B: |E\cap A| = i\}$ for all $1\le i\le r-1$.

\medskip
\noindent\textbf{Claim. }\textit{$\B_1$ is non-separable.}
\medskip

Note that the claim implies the proposition.
Indeed, without loss of generality, assume that $\B_1\subseteq \B'$. Note that $\B_{r-1}=\emptyset$ when $|A|=r-1$. For any set $E$ in $\B_{r-1}$ or $\B_{r-2}$, we can find a set $E'$ in $\B_1$ that is disjoint from $E$. So $E$ and thus all sets in $\B_{r-1}$ and $\B_{r-2}$ must be in $\B'$.
Similarly, using the sets in $\B_{r-2}$, we conclude that all sets in $\B_{2}$ must be in $\B'$.
So we get $\B = \B'$ after iteratively applying the same arguments.

So it remains to prove the claim.

\medskip
\noindent\textit{Proof of the claim.}
Let $C=Z\setminus A$.
We first prove the case when $|A|=r$, say, $A=\{x_1,\dots, x_r\}$. Note that $|C| = m-r \ge r+1$.
Let $a=1$, $b=r-1$ and $s=r$. We replace $\{x_1,\dots, x_{r-1}\}$ by $z_1$ and set $z_2=x_r$. Applying Proposition~\ref{prop:1} shows that all sets in $\B_{r-1}$ that contain $\{x_1,\dots, x_{r-1}\}$ and all sets in $\B_1$ that contain $x_r$ are in the same part, say, $\B'$.
Note that this implies that all sets in $\B_{r-2}$ that do not contain $x_r$ are in $\B'$.
Now fix any set $F\in \B_1$, let $F\cap A = \{x_i\}$ for some $i\in [r]$. Since $|A|=r$ and $|C|\ge r+1$, we can always pick a set in $\B_{r-2}$ that do not contain $x_r$ and is disjoint from $F$. So $F$ and thus all sets in $\B_1$ are in $\B'$.

Next assume $|A|=r-1$, say, $A=\{x_1,\dots, x_{r-1}\}$. Note that $|C|=m-(r-1)\ge r+2$.
Let $a=2$, $b=r-1$ and $s=r+1$. We replace $\{x_1,\dots, x_{r-2}\}$ by $z_1$ and set $z_2=x_{r-1}$. Applying Proposition~\ref{prop:1} shows that all sets in $\B_{r-2}$ that contain $\{x_1,\dots, x_{r-2}\}$ and all sets in $\B_1$ that contain $x_{r-1}$ are in the same part, say, $\B'$.
Note that if $r=3$, then we are done because $\B = \B_1$ and any set in $\B_1$ contains exactly one of $x_1$ and $x_2$.
Otherwise, $r\ge 4$. Note that all sets in $\B_{r-3}$ that do not contain $x_{r-1}$ are in $\B'$.
Now fix any set $F\in \B_1$, let $F\cap A = \{x_i\}$ for some $i\in [r-1]$. Since $|A|=r-1$ and $|C|\ge r+2$, we can always pick a set in $\B_{r-3}$ that do not contain $x_{r-1}$ and is disjoint from $F$. So $F$ and thus all sets in $\B_1$ are in $\B'$.
Thus the proof of the claim is complete.
\end{proof}

Now we show Lemma~\ref{lem:shift}. 
For a family $\C$ and an element $z$, let $\C(z) = \{E\cup \{z\}: E\in \C\}$.
Our scheme is as follows.
For the shift $S_{xy}: \h \rightarrow \F$,
let $\B_x$ be the subfamily of $\F$ such that
\[
\B_x = \{E\in \F: x\in E, y\notin E, (E\setminus \{x\})\cup \{y\}\notin \F \}
\]
and let $\B = \{E\setminus \{x\} : E\in \B_x\}$, which is a $(k-1)$-uniform family.
Clearly, only the sets in $\B_x$ might be obtained from the shift $S_{xy}$, i.e., $\F\setminus \B_x\subseteq \h$. So the shift $S_{xy}: \h \rightarrow \F$ can be interpreted as a partition $\B=\B_1\cup \B_2$ such that
\[
\h = (\F \setminus \B_x) \cup \B_1(y) \cup \B_2(x) \text{ and } \F = (\F \setminus \B_x) \cup \B_1(x)\cup \B_2(x),
\]
i.e., the sets in $\B_1(y)$ are shifted to $\B_1(x)$ by $S_{xy}$. Here
a natural requirement is that $\B_1(y) \cup \B_2(x)$ should be
intersecting, i.e., $(\B_1, \B_2)$ should be cross-intersecting.
We will show that $\B$ is non-separable, i.e., any cross-intersecting partition $\B=\B_1\cup \B_2$ satisfies that $\B=\B_1$ or $\B=\B_2$.
Observe that in all of our cases, $\B=\B_2$ means that $\h=\F$, and $\B=\B_1$ means that $\h$ is isomorphic to $\F$ with $y$ playing the role of $x$.
This will conclude the proof.

We mention that, in most cases, we will apply Proposition~\ref{prop:crossint} as follows. Let $Z = X\setminus \{x,y\}$ and $r=k-1$. Note that $|Z| \ge 2k+1 - 2 = 2r+1$. Our goal is to define $A$ appropriately so that we can apply Proposition~\ref{prop:crossint} and then conclude that $\B$ is non-separable.
We call the shift $S_{xy}: \h \rightarrow \F$ \emph{trivial} if $\B_x = \emptyset$.

\begin{proof}[Proof of Lemma~\ref{lem:shift}]
The proof consists of three cases on $\F$.

\medskip
\noindent\textbf{Case 1.} $\F = \G_{k-1}$. 
We may assume $k\ge 4$, since when $k=3$, $\G_{k-1}=\G_2$, which will be solved in Case 2.
We use the notation for $\G_{k-1}$ in Definition~\ref{ex:1}:
for a $(k-1)$-set $E\subset X$ and $x_0\in X\setminus E$, let $\G_{k-1}$ be the $k$-uniform family such that
\[
\G_{k-1} = \{G: E\subseteq G\} \cup \{ G: x_0\in G, G\cap E \neq \emptyset \}.
\]
The family~$\G_{k-1}$ partitions~$X$ into three types of elements:
\begin{itemize}
\item Type 1: $T_1=\{x_0\}$, 
\item Type 2: $T_2=E$ and 
\item Type 3: the set $T_3$ of the remaining elements ($|T_3|= n - k \ge k+1$).
\end{itemize}
Observe that shifts between two elements of the same type are trivial and for any $i<j$, any shift from an element of $T_i$ to an element of $T_j$ is trivial.
So we have the following two cases.

We first assume $x=x_0$.
Let $Z = X\setminus \{x,y\}$, $r=k-1\ge 3$ and $A=E\setminus \{y\}$. Note that $|A|\in \{r-1,r\}$ and we have $\B = \{B\subseteq Z: 0<|B\cap A| < |A|\}$.
So we can apply Proposition~\ref{prop:crossint} and the proof of this case is finished.

Next we assume that $x\in T_2$ and $y\in T_3$.
Let $C = T_3\setminus \{y\}$, we have
\[
\B = \{(E\setminus \{x\})\cup \{z\}: z\in C\} \cup \{ \{x_0\}\cup F: F\subseteq C, |F|=k-2 \}.
\]
Let $a = 1$, $b=k-2$ and $s=k-1$. We replace $E\setminus \{x\}$ by $z_1$ and set $z_2=x_0$.
Applying Proposition~\ref{prop:1} concludes the proof.

\medskip
\noindent\textbf{Case 2.} $\F = \G_2$. 
Observe that $\G_2$ contains only two types of elements: the elements in the 3-set, and the other elements. Observe that the shifts between two elements of the same type are trivial. Also, if $y$ is in the 3-set, then the shift is trivial.
So assume that the 3-set is $\{x, x_1, x_2\}$ and $y\notin \{x, x_1, x_2\}$.
Let $C=X\setminus \{x, x_1, x_2, y\}$, $a=b=k-2$ and $s=2k-2$.
So we have
\[
\B = \{\{x_1\}\cup D: D\subseteq C, |D|=a\} \cup \{\{x_2\}\cup E: E\subseteq C, |E|=b\}
\]
and we conclude this case by Proposition~\ref{prop:1}.

\medskip
\noindent\textbf{Case 3.} $\F = \J_2$. 
We use the notation for $\J_2$ in Definition~\ref{ex:1}:
for a $(k-1)$-set $E\subseteq X$ and a $3$-set $J=\{x_0, x_1, x_2\}\subseteq X\setminus E$, let $\J_2$ be a $k$-uniform family such that
\[
\J_2 = \{G: E\subseteq G, G\cap J\neq\emptyset\}\cup \{G: J\subseteq G\}\cup \{ G: x_0\in G, G\cap E \neq \emptyset \}.
\]
The family~$\J_2$ partitions~$X$ into four types of elements:
\begin{itemize}
\item Type 1: $T_1=\{x_0\}$, 
\item Type 2: $T_2=E$, 
\item Type 3: $T_3=\{x_1, x_2\}$ and
\item Type 4: the set $T_4$ of the remaining elements ($|T_4|=n-k-2\ge k-1$).
\end{itemize}
Observe that shifts between two elements of the same type are trivial and for any $i<j$, any shift from an element of $T_i$ to another element of $T_j$ is trivial.

We first assume $x=x_0$. If $y\in T_3$, let $A=E$ and note that $\B = \{B\subseteq Z: 0<|B\cap A| < |A|\}$.
So we can apply Proposition~\ref{prop:crossint} and conclude the proof of this case.
Otherwise, $y\in T_2\cup T_4$. This case is more involved. 
Let $A=E\setminus \{y\}$ and thus $|A|\in \{r-1,r\}$. 
Consider a cross-intersecting partition $\B = \B_1\cup \B_2$.
Observe that $\{B\subseteq Z: 0< |B\cap A|< |A|\}\subseteq \B$.
We partition $\B$ into three subfamilies $\B^{*}, \B^0, \B^{**}$, where
\begin{align*}
&\B^* = \{B\subseteq Z: 0< |B\cap A|< |A|\}, \\
&\B^0 = \{B\in \B: B\cap A=\emptyset\} = \{\{x_1, x_2\}\cup F: F\subseteq T_4\setminus\{y\},\, |F|=k-3 \} \text{ and} \\
&\B^{**} = \{B\in \B: A\subseteq B\} = 
\begin{cases}
 \{A\cup\{z\}: z\in T_4 \} &\text{ if }y\in T_2, \\
 \{A\} &\text{ if } y\in T_4.
\end{cases}
\end{align*}

To get a cross-intersecting partition of $\B$, we need to distribute the sets in $\B^*\cup \B^0\cup \B^{**}$.
Note that $\B^*=\emptyset$ if and only if $|A|=1$, which, in turn, is equivalent to $k=3$ and $y\in T_2$.
In this case~$\B^0$ contains only $\{x_1, x_2\}$, which is disjoint from all other sets in $\B^{**}$ and thus $\B$ is non-separable.

Now suppose $\B^*\neq \emptyset$.
We apply Proposition~\ref{prop:crossint} and get that $\B^*$ is non-separable, i.e., $\B^* \subseteq \B_1$ or $\B^* \subseteq \B_2$. 
We first consider any set $P\in \B^0$. 
Fix any $a\in A$. Since $|Z\setminus \{a\}|\ge 2r$, there exists an $r$-set $P'\subseteq Z\setminus (\{a\}\cup P)$ such that $0< |P'\cap A|< |A|$ and $P\cap P'=\emptyset$. 
Thus, $P\in \B^0$ and $P'\in \B^*$ must be in the same part.
So all sets of $\B^*\cup \B^0$ belong to the same part.
Next, consider any set $B\in \B^{**}$ and note that $|B\cap T_4|\le 1$. Clearly, since $|T_4|\ge k-1$, there exists a $(k-3)$-set $F\subseteq T_4\setminus \{y\}$ such that $B\cap F=\emptyset$.
So $P=F\cup \{x_1, x_2\}\in B^0$ and $B\cap P=\emptyset$, which implies that $P$ and $B$ belong to the same part.
Thus, we conclude that $\B = \B_1$ or $\B = \B_2$ and we are done.

Next we assume $x\in T_2$. Let $E_i = (E\cup\{x_i\})\setminus \{x\}$ for $i=1,2$. Observe that if $y\in T_4$, then
\begin{align*}
\B = \{E_1, E_2\} \cup \{ G: x_0\in G, \,G\cap E = \emptyset,\, |G\cap \{x_1, x_2\}|\le 1, y\notin G \}.
\end{align*}
Otherwise $y\in T_3$, without loss of generality, $y=x_1$, then
\[
\B = \{E_2\}\cup \{ G: x_0\in G, \,G\cap E = \emptyset, \,G\cap \{x_1, x_2\}=\emptyset \}.
\]

In the former case, since $|T_4\setminus \{y\}|\ge k-2$, there is a set $B\in \B\setminus \{E_1, E_2\}$ such that $B\cap \{x_1, x_2\} = \emptyset$. 
Note that $B\cap E_1 = B\cap E_2 = \emptyset$, so $E_1, E_2$ must belong to the same part.
Moreover, for any set $B'\in \B\setminus \{E_1, E_2\}$, because $|B'\cap \{x_1, x_2\}|\le 1$, we have $B'\cap E_1=\emptyset$ or $B'\cap E_2=\emptyset$. Thus $\B$ is non-separable and we are done.
In the latter case, observe that for any $B\in \B\setminus \{E_2\}$, $E_2\cap B=\emptyset$.
So all sets in $\B$ must belong to the part in which $E_2$ is and we are done. 

At last, we assume that $x\in T_3$ and $y\in T_4$. Without loss of generality, let $x=x_1$.
In this case, we have that 
\[
\B = \{E\}\cup \{G: \{x_0, x_2\}\subseteq G, G\cap E=\emptyset, y\notin G\}.
\]
Clearly, for any $B\in \B\setminus \{E\}$, $E\cap B=\emptyset$.
So all sets in $\B$ must belong to the part in which $E$ is and we are done. 
\end{proof}

\subsection{The case $n\ge 8$ for $k=3$ and $n\ge 2k+1$ for $k\ge 4$}
We assume that the equality in \eqref{eq:eq0} or \eqref{eq:eq} holds.
We first show that $\G$, the family obtained by the shifts, is isomorphic to one of the extremal examples.
Indeed, to have equality we must have equality in Lemma~\ref{lem:calc}.
Note that $|\A_2| = k-1$ implies that $\A_2$ is a star of size $k-1$ or a triangle (for $k=4$ only).
If $k=4$ and $\A_2$ is a triangle at $\{x, y, z\}$, then any member of $\bigcup_{1\le i\le k}\A_i$ (and thus any member of $\G$) must contain at least two elements of $\{x, y, z\}$, which means that $\G\subseteq_S \G_2$.
Otherwise, suppose $\A_2$ is a star at $x$ of size $k-1$, say $\{xx_1, \dots, xx_{k-1}\}$.
Note that all sets in $\G$ not containing $x$ must contain $\{x_1,\dots, x_{k-1}\}$. Moreover, there are at least two such sets by Fact~\ref{fact:HM}.
So according to the number of such sets in $\G$, we have $\G\subseteq_S \G_{k-1}$ (only for $k\ge 4$) or $\G\subseteq_S \J_i$ for some $2\le i\le k-1$. 
Straightforward calculations show that the extremal value of $|\G|$ is achieved by $|\G_{3}|$ or $|\J_2|$ when $k=4$, and by $|\J_2|$ only when $k \neq 4$.
Then we are done by Lemma~\ref{lem:shift}.

\subsection{The case $n=7$ and $k=3$}

In this case we have to go through the shifting argument again. 
We assume that $\h$ is of the maximal size subject to the assumptions, i.e., $|\h|=12$.
Recall that if we apply~$S_{xy}$ repeatedly to~$\h$, then we obtain a family in one of the following four cases,
\begin{itemize}
\item[(0)] a family $\G$ which is {stable}, i.e., $S_{xy}(\G) = \G$ holds for all $x<y$,
\item[(1)] a family $\h_1$ such that $S_{xy}(\h_1)$ is EKR at $x$,
\item[(2)] a family $\h_2$ such that $\G'=S_{xy}(\h_2)$ is HM at $x$, or
\item[(3)] a family $\h_3$ such that $\G''=S_{xy}(\h_3)$ is HM at $\{x, x_1, x_2\}$ for some $x_1, x_2\in X\setminus \{x, y\}$.
\end{itemize}

We will use the following fact, the proof of which is straightforward and is
thus omitted.

\begin{fact}\label{fact:K23}
Let $\D_0$ be the graph $K_{2,3}$ and let $\D_1$ be the graph obtained from deleting any edge of $\D_0$. 
Let $\mathscr D$ be the set of graphs which are obtained from deleting any two edges of $K_5$. 
Then $\D_0$, $\D_1$ and all families in $\mathscr D$ are non-separable.
\end{fact}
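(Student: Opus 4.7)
The approach is to reduce non-separability to a connectivity statement about an auxiliary graph. For any $2$-uniform family $\B$, let $H_\B$ be the simple graph on vertex set $\B$ whose edges join pairs of members of $\B$ that are disjoint as $2$-sets. A partition $\B=\B_1\cup\B_2$ is cross-intersecting if and only if $H_\B$ has no edges between $\B_1$ and $\B_2$, i.e., $\B_1$ and $\B_2$ are unions of connected components of $H_\B$. Hence $\B$ is non-separable if and only if $H_\B$ is connected, and the fact reduces to checking this for $H_{\D_0}$, $H_{\D_1}$ and $H_D$ for each $D\in\mathscr D$.

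For $\D_0=K_{2,3}$ with parts $\{a,b\}$ and $\{c,d,e\}$, every edge $xy$ has exactly two disjoint partners in $\D_0$, namely the edges of $\D_0$ supported on the three-vertex set obtained by removing $\{x,y\}$. Thus $H_{\D_0}$ is $2$-regular on six vertices, and a direct trace shows that $H_{\D_0}$ is the single $6$-cycle through $ac,bd,ae,bc,ad,be$, which is connected. For $\D_1$, removing any edge of $\D_0$ deletes the corresponding vertex from this $6$-cycle, leaving a path on five vertices, which is again connected.

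For $D\in\mathscr D$, up to isomorphism there are only two cases, according to whether the two deleted edges of $K_5$ share a vertex or are vertex-disjoint. In each case, every one of the eight remaining edges has between one and three disjoint partners in $D$, obtained by restricting $K_5$ to the three unused vertices and discarding any deleted edge supported there. Writing these neighbourhoods out and tracing a short spanning walk through $H_D$ verifies connectivity in both cases. The main obstacle is merely bookkeeping; the conceptual step is the reduction in the first paragraph, after which all four graphs are small enough that direct inspection suffices.
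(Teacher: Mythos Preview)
Your argument is correct. The key observation --- that a $2$-uniform family $\B$ is non-separable precisely when the ``disjointness graph'' $H_\B$ (vertices the members of $\B$, edges the disjoint pairs) is connected --- is exactly the right reformulation, and your case checks for $H_{\D_0}$, $H_{\D_1}$ and the two isomorphism types in $\mathscr D$ are accurate. In particular, $H_{\D_0}$ is indeed the $6$-cycle you describe, deleting an edge of $\D_0$ yields a $5$-path in $H_{\D_1}$, and for $K_5$ minus two edges the remaining eight edges form a connected $H_D$ in both the adjacent-deletion and matching-deletion cases.

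As for comparison with the paper: the paper simply declares the proof ``straightforward'' and omits it, so there is no approach to compare against. Your reduction to connectivity of $H_\B$ is a clean way to organise what would otherwise be an ad hoc verification, and it makes the ``straightforward'' claim genuinely transparent.
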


First assume that we reach Case (2) and note that $|\G'|=12 = |\F_1|-1$. 
We use the following notation. Let $F$ be a $3$-set of $X$ and $x\in X\setminus F$. Let
\[
\F_1 := \{ F\}\cup \{G\subseteq X: x\in G, F\cap G\neq\emptyset\}.
\]
Note that if $y\in F$, then $\B$ is isomorphic to $\D_0$ or $\D_1$ and
if $y\notin F$, then $\B$ is isomorphic to some $\D_2\in \mathscr D$.
By Fact~\ref{fact:K23}, in either case, we know that
$\h_2\subseteq \F_1$, a contradiction.

Second assume that we reach Case (3) and note that $|\G''|=12 = |\G_2|-1$.
Since $\G''$ misses only one set of $\G_2$, we know $\{x, x_1, x_2\}\in \G''$ -- otherwise $\{y, x_1, x_2\}$ would have been shifted to $\{x, x_1, x_2\}$, a contradiction.
Observe that if $\{y, x_1, x_2\}\in \G''$, then $\B$ is isomorphic to $\D_0$ or $\D_1$.
By Fact~\ref{fact:K23}, in either case, we know that $\h_3\subseteq \G_2$, a contradiction.
Otherwise, $\{x_1, x_2\}\in \B$ and $\B\setminus \{x_1, x_2\}$ is isomorphic to $\D_0$.
So in this case $\B$ is not non-separable.
However, since $\D_0$ is non-separable, the only non-trivial partition of $\B$ is $\{\{x_1, x_2\}\}$ and $\B\setminus \{x_1, x_2\}$.
In all cases (with trivial partitions or the non-trivial partition of $\B$), it is easy to see that $\h_3\subseteq \G_2$, a contradiction.

Finally we assume that we reach Case (1). Let $X_1=\{x, y\}$.
We apply the shifts~$S_{x'y'}$ for $x',y'\in X\setminus X_1$ and $x'<y'$. 
Similar arguments in Proposition~\ref{prop:adjust} show that we will not reach Case (1) again and by the previous two cases, we will not reach any of Case (2) or (3).
So we must get a family $\G$ such that $S_{x'y'}(\G)=\G$ for all $x', y'\in X \setminus X_1$, $x'<y'$.

Let $X_0=\emptyset$ and for $i=0,1$, let $Y$ be the union of $X_i$ and the first $6-|X_i|$ elements in $X\setminus X_i$.
Note that we have $|Y \setminus X_i| \ge 3=2k -3$ and thus Lemma~\ref{lem:2kint} holds. 
Then the proof of Lemma~\ref{lem:calc} gives that $|\A_1|=0$, $|\A_2|\le 2$ and $|\A_3|\le \binom52= 10$.
So we get $|\G|= |\A_2| + |\A_3|\le 12$.
To have equality we must have $|\A_2|=2$ and the same argument in Section 3.2 implies that $\G=\J_2$.
By Lemma~\ref{lem:shift}, $\h=\J_2$.

\section{Concluding remarks}
\label{sec:concluding-remarks}

We have restricted ourselves to intersecting families in this note,
and did not consider $t$-intersecting families (that is, families in which the intersection of any two of its members has at least~$t$ elements) for~$t>1$.  It would be
natural to investigate this more general setting.  Naturally, the
starting point would be the generalization of the Hilton--Milner
theorem to $t$-intersecting families~\cite{ahlswede96:_complete} (see
also~\cite{balogh08:_ahlsw_khach}).

One may also consider going beyond Theorem~\ref{thm:BeyHM}: that is,
\textit{what is the maximum size of an intersecting family $\h$ that
  is neither EKR, nor HM, nor is contained in~$\J_2$ $($and in~$\G_2$
  and~$\G_3$ if~$k=4$$)$?}
Very recently, Kostochka and
Mubayi~\cite{kostochka16:_structure} established that the answer is $|\J_3|$ for all large enough~$n$ \footnote{This was also independently observed by Jiang \cite{Jiang}.}.  
Naturally, it would be good to know the complete answer, that is, for all $n$ and $k$ with $n> 2k$.

\section*{Acknowledgement}
We thank an anonymous referee for many helpful comments that helped us
improve the presentation of the paper. 
We thank Dhruv Mubayi for drawing our attention to~\cite{Frankl80} and for useful discussions.

\bibliographystyle{amsplain}
\bibliography{refs_JH}
\endgroup

\end{document}